\newcommand{\IR}{\mathbb{R}}
\newtheorem{theorem}{Theorem}
\newtheorem{prop}{Proposition}
\newtheorem{defin}[prop]{Definition}
\newtheorem{example}[prop]{Example}
\newtheorem{lemma}[prop]{Lemma}
\newtheorem{remarks}[prop]{Remarks}
\newtheorem{coro}[prop]{Corollary}
\begin{document}
\title{Nonlinear Beurling-Deny criteria}
% \title{Characterization of nonlinear Dirichlet forms via contractions}

\author[Puchert]{Simon Puchert}
\address{Institut für Mathematik \\Friedrich-Schiller-Universität Jena \\07743 Jena, Germany } \email{simon.puchert@uni-jena.de}

\begin{abstract}
 This note introduces a simple symmetric contraction property for functionals. This property clearly characterizes Dirichlet forms in the linear case. We show that it also characterizes Dirichlet forms in the non-linear case. Furthermore, we use this property to gain a new perspective on criteria of Cipriani / Grillo as well as Brigati / Hartarsky.
\end{abstract}
\maketitle

\section*{Introduction}
The study of (bilinear) Dirichlet forms goes back to the work of Beurling / Deny \cite{BD, BD1}. Starting with \cite{CG}, there has been a growing interest in a non-linear version of the theory, see e.g. \cite{CHK, CW, HKMP, W, G}. A crucial ingredient in all these considerations is compatibility of the functionals with normal contractions. This was used in \cite{CG} to characterize $L^\infty$-contractivity and order preservation of the associated semigroup.

For a bilinear functional $\mathcal{E}$ this compatibility takes the simple form 
 $$\mathcal{E}(Cf)\leq \mathcal{E}(f)$$
for all (some) normal contractions $C$.

In the non-linear case the situation is more complex. Indeed, following \cite{BP, CG}, various characterizations have been considered \cite{BP,Cla,BH}. 

Here, it is our aim to introduce a rather simple condition much in the spirit of the original work of Beurling / Deny. The key point is that it is not applied to the original functional $\mathcal{E}$ but to a derived functional $\mathcal{E}_f$ that coincides with $\mathcal{E}$ in the linear case. Our condition then just reads 
 $$ \mathcal{E}_f (Cg)\leq \mathcal{E}_f (g) $$
for all $f,g$ in the domain and all (some) normal contractions $C$.
 
Precise definitions (Definition~\ref{def-main}) are given in the next section, where also the equivalence to a condition of Bénilan / Picard \cite{BP}, Théorème~2.1 (see also Claus \cite{Cla}, Theorem~2.39) is shown (Theorem~\ref{thm-BP}). This is followed by a section giving an interpretation and new variants of the criterion of Cipriani / Grillo \cite{CG}, Definition~3.1 (Theorem~\ref{thm-CG}). The equivalence to the conditions in Brigati / Hartarsky \cite{BH}, Theorem~1.3 is shown in the last section (Theorem~\ref{thm-BH}).

The final section contains a short summary of the results.

\textbf{Acknowledgements.} The author would like to thank Daniel Lenz and Marcel Schmidt for their help in preparing the manuscript. In fact, the small gap in the proof of \cite{BH}, Theorem~1.2, was found by Marcel Schmidt. Further thanks go to Giovanni Brigati for helpful discussions.

\section{The contraction property}
Let $(X,\mathcal{A})$ be a measurable space and let $m$ be a measure on $X$. We consider functionals on the real Hilbert space $L^2(X,m)$,
 $$\mathcal{E}\colon L^2(X,m)\to [0,\infty].$$
 
The set
 $$ D(\mathcal{E}) = \{f\in L^2(X,m) \mid \mathcal{E}(f) < \infty\} $$
is called the \emph{effective domain} of $\mathcal{E}$. The functional $\mathcal{E}$ is considered to be \emph{proper}, if its effective domain is nonempty.

Additionally, $\mathcal{E}$ is called \emph{symmetric}, if for all $f\in L^2(X,m)$, we have $\mathcal{E}(-f) = \mathcal{E}(f)$.

In the following sections, we want to analyze the following definition of nonlinear Dirichlet forms.

\begin{defin}[\cite{CG}, Definition 3.1]\label{def-main}
 Let $\mathcal{E}\colon L^2(X,m) \to [0,\infty]$ be a proper, lower semicontinuous, convex functional. Then $\mathcal{E}$ is a nonlinear Dirichlet form (in the sense of Cipriani / Grillo), if for all $u,v\in L^2(X,m)$ and all $\alpha > 0$, we have
    $$ \mathcal{E}\left(\frac{u + u\wedge v}{2}\right) + \mathcal{E}\left(\frac{v + u\vee v}{2}\right) \leq \mathcal{E}(u) + \mathcal{E}(v) $$
   as well as
    $$ \mathcal{E}(P_{2,\alpha}^1(u,v)) + \mathcal{E}(P_{2,\alpha}^2(u,v)) \leq \mathcal{E}(u) + \mathcal{E}(v), $$
   where
    $$ P_{2,\alpha}^1(u,v) = v + \frac{1}{2}[(u-v+\alpha)_+ - (u-v-\alpha)_-] $$
   and
    $$ P_{2,\alpha}^2(u,v) = u - \frac{1}{2}[(u-v+\alpha)_+ - (u-v-\alpha)_-]. $$
\end{defin}

This definition is somewhat involved, motivating the search for alternative approaches. The following contraction property will lay the foundation for our subsequent characterizations.

We call a function $C\colon \IR\to \IR$ a \emph{normal contraction} if $C(0) = 0$ and $C$ is $1$-Lipschitz, i.e. $|C(x) - C(y)| \leq |x-y|$ for all $x,y\in \IR$.

\begin{defin}[The contraction property]\label{def:contraction}
 A functional $\mathcal{E}\colon L^2(X,m)\to [0,\infty]$ is called \emph{compatible} with the normal contraction $C\colon \IR \to\IR$ 
 if for all $f,g\in L^2(X,m)$ the inequality  
  $$ \mathcal{E}(f + Cg) + \mathcal{E}(f - Cg) \leq \mathcal{E}(f+g) + \mathcal{E}(f-g) $$
 holds. If $\mathcal{E}$ is compatible with all normal contractions it is said to have the \emph{contraction property}.
\end{defin}

\begin{remarks}\label{rem-contraction}
 \begin{enumerate}[(a)]
  \item A functional $\mathcal{E}$ is called \emph{bilinear}, if there exists a bilinear map $\mathcal{E}'\colon \mathcal{D}\times \mathcal{D}\to \IR$ on some subspace $\mathcal{D}$ of $L^2(X,m)$ with $\mathcal{E}'(f,f) = \mathcal{E}(f)$ for all $f\in \mathcal{D}$ and $\mathcal{E}(f) =\infty$ for all $f\notin \mathcal{D}$. 
  By the parallelogram identity, a bilinear functional $\mathcal{E}$ is compatible with the contraction $C$ if and only if $\mathcal{E}(Cf)\leq \mathcal{E}(f)$ holds for all $f\in L^2 (X,m)$.
 
  \item The contraction property more directly depends on the functionals $g\mapsto \mathcal{E}(f+g) + \mathcal{E}(f-g)$. We modify these functionals slightly in order to coincide with $\mathcal{E}$ in the bilinear case. For each $f\in D(\mathcal{E})$, we define the $f$-shift
   $$ \mathcal{E}_f\colon L^2(X,m) \to [0,\infty], \quad \mathcal{E}_f(g) := \frac{1}{2}(\mathcal{E}(f+g) + \mathcal{E}(f-g)) - \mathcal{E}(f). $$
  These $f$-shifts can be interpreted as second-order central differences. As such, the $f$-shifts are symmetric and satisfy $\mathcal{E}(0) = 0$.
  
  With this at hand, compatibility with normal contractions is just the classical contraction property for all of these functionals, i.e.
   $$ \mathcal{E}_f(Cg) \leq \mathcal{E}_f(g) $$
  for all $f\in D(\mathcal{E})$, all functions $g\in L^2(X,m)$ and all normal contractions $C$. For the remaining case $\mathcal{E}(f) = \infty$ we observe that it necessarily entails $\mathcal{E}(f+g) + \mathcal{E}(f-g) = \infty$ by convexity.

  \item For the functionals $\mathcal{E}_f$ from (b), a direct computation establishes validity of 
   $$ (\mathcal{E}_f)_g = \frac{1}{2}(\mathcal{E}_{f+g} + \mathcal{E}_{f-g}) $$
  for all $f,g$ with $\mathcal{E}(f) < \infty$ and $\mathcal{E}_f(g) < \infty$ (which is exactly where each side is defined). Therefore, the contraction property for $\mathcal{E}$ implies the contraction property for all $\mathcal{E}_f$.

  This idea allows us to reduce some properties of nonlinear Dirichlet forms to the case of symmetric nonlinear Dirichlet forms $\mathcal{E}$ satisfying $\mathcal{E}(0) = 0$.

  In theory, we can use the $f$-shifts of given nonlinear Dirichlet forms to generate new nonlinear Dirichlet forms, but there does not seem to be any useful application at the moment.
  
  Using the $f$-shifts also reduces proofs of the contraction property for classes of functionals that are closed under the shift operation to just showing the Beurling-Deny criterion, as the next example demonstrates. As seen above, the bilinear Dirichlet forms are invariant under this operation, so this would also be a (well-known) application.
 \end{enumerate}
\end{remarks}

\begin{example}
  Let $X$ be a countable set, equipped with the counting measure. The class of mixed Dirichlet energies, given by all functionals of the form
   $$ \mathcal{E}\colon \ell^2(X)\to[0,\infty],\quad \mathcal{E}(f) := \sum\limits_{x,y\in X} b_{x,y}(f(x) - f(y)) $$
  with convex, symmetric, lower semicontinuous functions $b_{x,y}\colon \IR\to [0,\infty]$ satisfying $b_{x,y}(0) = 0$ for each edge $(x,y)\in X\times X$, is closed under the shift operation.
  
  More specifically,
   $$ \mathcal{E}_f(g) = \sum\limits_{x,y\in X} \widetilde{b_{x,y}}(g(x) - g(y)) $$
  with
   $$ \widetilde{b_{x,y}}(t) = \frac{1}{2}(b_{x,y}(f(x) - f(y) + t) + b_{x,y}(f(x) - f(y) - t)) - b_{x,y}(f(x) - f(y)). $$
  
  As we saw in the previous remark, the contraction property is equivalent to $\mathcal{E}_f(Cg) \leq \mathcal{E}_f(g)$ for all $f\in D(\mathcal{E})$, all $g\in L^2(X,m)$ and all normal contractions $C$. Since $\mathcal{E}_f$ is just another mixed Dirichlet energy, it suffices to show $\mathcal{E}(Cg)\leq \mathcal{E}(g)$ for all $g\in L^2(X,m)$, all normal contractions $C$ and all mixed Dirichlet energies $\mathcal{E}$. This is trivial.
  
  Indeed, mixed Dirichlet energies are nonlinear Dirichlet forms. Despite the similarities to the classical (bilinear) Dirichlet energies on graphs, this doesn't seem to be mentioned explicitly in the literature.
\end{example}

For later use, we make the following simple observations. In addition to their structural relevance, these lemmas will allow us to reduce the set of contractions we have to consider. Lemma \ref{composition} allows a reduction to a dense subset with respect to pointwise convergence, while Lemma \ref{pw-conv} allows a reduction to generators with respect to composition.

\begin{lemma}\label{composition}
 If a functional $\mathcal{E}\colon L^2(X,m)\to [0,\infty]$ is compatible with the normal contractions $C_1, C_2 \colon \IR \to \IR$, then it is also compatible with their composition $C_1\circ C_2$.
 
 Furthermore, any functional is compatible with the normal contractions $x\mapsto x$ and $x\mapsto -x$.
\end{lemma}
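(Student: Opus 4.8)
The plan is to handle the two claims separately, dispatching the trivial identities first and then chaining the two compatibility inequalities to get the composition.

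For the final assertion, both $x\mapsto x$ and $x\mapsto -x$ are visibly normal contractions. Substituting $C(x)=x$ into Definition~\ref{def:contraction} produces $\mathcal{E}(f+g)+\mathcal{E}(f-g)\leq \mathcal{E}(f+g)+\mathcal{E}(f-g)$, which is an equality; substituting $C(x)=-x$ produces $\mathcal{E}(f-g)+\mathcal{E}(f+g)\leq \mathcal{E}(f+g)+\mathcal{E}(f-g)$, again an equality after reordering the two summands on the left. Hence every functional is compatible with these two contractions, with no hypothesis required. For the composition, I would first record that $C_1\circ C_2$ is itself a normal contraction: $C_1(C_2(0))=C_1(0)=0$, and a composite of $1$-Lipschitz maps is $1$-Lipschitz.

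The core of the argument is a two-step chain. Applying compatibility with $C_1$ to the pair $(f, C_2 g)$ gives
 $$ \mathcal{E}(f + C_1(C_2 g)) + \mathcal{E}(f - C_1(C_2 g)) \leq \mathcal{E}(f + C_2 g) + \mathcal{E}(f - C_2 g), $$
while applying compatibility with $C_2$ to the pair $(f,g)$ gives
 $$ \mathcal{E}(f + C_2 g) + \mathcal{E}(f - C_2 g) \leq \mathcal{E}(f+g) + \mathcal{E}(f-g). $$
Since $C_1(C_2 g) = (C_1\circ C_2)g$ as elements of $L^2(X,m)$, transitivity of $\leq$ in $[0,\infty]$ combines these into exactly the compatibility inequality of $\mathcal{E}$ with $C_1\circ C_2$ for $(f,g)$; as $f,g$ were arbitrary, this proves the claim.

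There is essentially no obstacle here beyond bookkeeping, and transitivity through a possibly infinite middle term is unproblematic in $[0,\infty]$. The one point deserving a moment's care is that the intermediate inequality even makes sense: I must verify that $C_2 g\in L^2(X,m)$, so that compatibility with $C_1$ may legitimately be invoked at the argument $C_2 g$. This follows from the pointwise bound $|C_2 g| = |C_2 g - C_2 0| \leq |g|$, coming from $C_2(0)=0$ together with the Lipschitz estimate, which keeps $C_2 g$ in $L^2$ and lets the middle terms $\mathcal{E}(f\pm C_2 g)$ cancel cleanly across the chain.
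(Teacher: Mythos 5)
Your proposal is correct and follows exactly the same route as the paper: chain compatibility with $C_1$ applied at the pair $(f, C_2 g)$ with compatibility with $C_2$ at $(f,g)$, and observe that the identity and negation cases reduce to trivial equalities by the symmetry of the definition. The extra remarks on $C_2 g\in L^2(X,m)$ and on transitivity in $[0,\infty]$ are harmless bookkeeping the paper leaves implicit.
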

\begin{proof}
 This is a simple chained application of the compatibility:
 \begin{align*}
  \mathcal{E}(f + C_1\circ C_2 g) + \mathcal{E}(f - C_1\circ C_2 g) & \leq \mathcal{E}(f + C_2 g) + \mathcal{E}(f - C_2 g) \\
                                                                    & \leq \mathcal{E}(f + g) + \mathcal{E}(f - g)
 \end{align*}
 for all $f,g\in L^2(X,m)$.
 
 The second part is trivial.
\end{proof}

\begin{lemma}\label{pw-conv}
 Let $\mathcal{E}\colon L^2(X,m)\to [0,\infty]$ be a lower semicontinuous functional and let $(C_n)_{n\in\mathbb{N}}$ be a sequence of normal contractions that has the pointwise limit $C$, i.e. for all $t\in \IR$ we have $C_n(t) \to C(t)$ as $n\to \infty$.
 
 Then, if $\mathcal{E}$ is compatible with the normal contractions $C_n$, it is also compatible with their pointwise limit $C$.
\end{lemma}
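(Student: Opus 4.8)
The plan is to combine the lower semicontinuity of $\mathcal{E}$ with an $L^2$-convergence statement for the contracted functions. First I would check that the pointwise limit $C$ is itself a normal contraction, so that compatibility with $C$ is meaningful: passing to the limit in $C_n(0) = 0$ and in $|C_n(x) - C_n(y)| \leq |x-y|$ gives $C(0) = 0$ and the $1$-Lipschitz bound for $C$. (Strictly, the inequality to be proved makes sense regardless, but this keeps us within the setting of Definition~\ref{def:contraction}.)

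The key step is to upgrade the pointwise convergence $C_n(t) \to C(t)$ on $\IR$ to convergence $C_n g \to Cg$ in $L^2(X,m)$ for every fixed $g \in L^2(X,m)$. Since $g(x) \in \IR$ for $m$-almost every $x$, we have $C_n(g(x)) \to C(g(x))$ $m$-almost everywhere. The normal contraction property provides the domination $|C_n(g(x))| = |C_n(g(x)) - C_n(0)| \leq |g(x)|$, and likewise $|C(g(x))| \leq |g(x)|$, so $|C_n g - Cg|^2 \leq 4|g|^2 \in L^1(X,m)$. The dominated convergence theorem then yields $C_n g \to Cg$ in $L^2$, and consequently $f \pm C_n g \to f \pm Cg$ in $L^2$ for any fixed $f$.

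With this convergence in hand, the inequality follows from lower semicontinuity and the superadditivity of $\liminf$. For fixed $f,g$, lower semicontinuity gives $\mathcal{E}(f + Cg) \leq \liminf_n \mathcal{E}(f + C_n g)$ and $\mathcal{E}(f - Cg) \leq \liminf_n \mathcal{E}(f - C_n g)$. Since all quantities lie in $[0,\infty]$, no cancellation problems arise; adding these and using $\liminf a_n + \liminf b_n \leq \liminf(a_n + b_n)$ together with compatibility with each $C_n$ gives
\[ \mathcal{E}(f+Cg) + \mathcal{E}(f-Cg) \leq \liminf_n \big(\mathcal{E}(f+C_n g) + \mathcal{E}(f-C_n g)\big) \leq \mathcal{E}(f+g) + \mathcal{E}(f-g), \]
which is exactly compatibility with $C$.

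The main obstacle is the $L^2$-convergence step: everything hinges on producing an integrable dominating function, and it is precisely the contraction bound $|C_n(t)| \leq |t|$ that makes the dominated convergence theorem applicable. Once that is secured, the remainder is a routine combination of lower semicontinuity with the elementary superadditivity of the $\liminf$.
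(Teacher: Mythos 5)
Your argument is correct and follows essentially the same route as the paper: dominated convergence (using the bound $|C_n g| \leq |g|$) to get $f \pm C_n g \to f \pm Cg$ in $L^2(X,m)$, then lower semicontinuity combined with superadditivity of the $\liminf$. The extra observation that $C$ is itself a normal contraction is a harmless addition not needed for the inequality.
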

\begin{proof}
 Let $f,g \in L^2(X,m)$. Pointwise convergence of $C_n$ and $|C_n g| \leq |g| \in L^2(X,m)$ implies that $C_n g \to C g$ in $L^2(X,m)$ via Lebesgue's theorem. This means $f\pm C_n g \to f\pm C g$ in $L^2(X,m)$ and thus
 \begin{align*}
  \mathcal{E}(f + Cg) + \mathcal{E}(f - Cg) & \leq \liminf \mathcal{E}(f + C_n g) + \liminf \mathcal{E}(f - C_n g) \\
                                            & \leq \liminf \mathcal{E}(f + C_n g) + \mathcal{E}(f - C_n g) \\
                                            & \leq \mathcal{E}(f+g) + \mathcal{E}(f-g)
 \end{align*}
 by lower semicontinuity.
\end{proof}

We now discuss how the contraction property relates to the condition by Bénilan / Picard \cite{BP}, Théorème~2.1. This theorem was recently discussed and streamlined by Claus in \cite{Cla}, Theorem~2.39, where it was named ``nonlinear Beurling-Deny criteria''. Given this equivalence to a known criterion, we can then establish the connection to the main setting.

\begin{theorem}\label{thm-BP}
 Let $\mathcal{E}\colon L^2(X,m) \to [0,\infty]$ be a proper, lower semicontinuous functional. Then the following assertions are equivalent:

 \begin{itemize}
  \item[(i)] For all increasing normal contractions $p\colon \IR\to\IR$ and all $u,v\in L^2(X,m)$, the functional $\mathcal{E}$ satisfies
   \begin{align}
    \tag{$\star$} \mathcal{E}(u - p(u-v)) + \mathcal{E}(v + p(u-v)) \leq \mathcal{E}(u) + \mathcal{E}(v).
   \end{align}
  \item[(ii)]  The functional $\mathcal{E}$ has the contraction property. 
  \item[(iii)] $\mathcal{E}$ is a nonlinear Dirichlet form.
 \end{itemize}
\end{theorem}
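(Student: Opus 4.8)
The plan is to prove the cycle by establishing $(i)\Leftrightarrow(ii)$ through a change of variables, $(i)\Rightarrow(iii)$ by specialisation, and $(iii)\Rightarrow(i)$ through a generation argument for normal contractions; convexity, which $(iii)$ supplies, will be the decisive extra ingredient in the last step.

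For $(i)\Leftrightarrow(ii)$ I substitute $u=f+g$, $v=f-g$, i.e. $f=\tfrac12(u+v)$ and $g=\tfrac12(u-v)$. This is a linear bijection of $L^2(X,m)\times L^2(X,m)$, and the right-hand sides of $(\star)$ and of the contraction inequality already agree, both being $\mathcal{E}(u)+\mathcal{E}(v)=\mathcal{E}(f+g)+\mathcal{E}(f-g)$. For the left-hand sides I pair an increasing normal contraction $p$ with the contraction $C(t):=p(2t)-t$, inverted by $p(t)=\tfrac12 t+C(\tfrac12 t)$; a direct check shows $C$ is a normal contraction precisely when $p$ is an increasing one, and $p\mapsto C$ is a bijection onto \emph{all} normal contractions. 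Under it one computes $f+Cg=v+p(u-v)$ and $f-Cg=u-p(u-v)$, so $(\star)$ for $p$ is literally the contraction inequality for $C$. Hence $(i)\Leftrightarrow(ii)$.

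For $(i)\Rightarrow(iii)$ I rewrite $\tfrac{u+u\wedge v}{2}=u-\tfrac12(u-v)_+$ and $\tfrac{v+u\vee v}{2}=v+\tfrac12(u-v)_+$, exhibiting the first inequality of Definition~\ref{def-main} as $(\star)$ for the increasing normal contraction $p(t)=\tfrac12 t_+$. A short computation likewise identifies $P^1_{2,\alpha}$ and $P^2_{2,\alpha}$ with $v+q_\alpha(u-v)$ and $u-q_\alpha(u-v)$, where $q_\alpha(t)=t$ on $[-\alpha,\alpha]$ and $q_\alpha(t)=\tfrac12(t\pm\alpha)$ for $\pm t\ge\alpha$, so the second inequality is $(\star)$ for $p=q_\alpha$. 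Finally, $(\star)$ for $p=\tfrac12\,\mathrm{id}$ reads $2\mathcal{E}(\tfrac{u+v}{2})\le\mathcal{E}(u)+\mathcal{E}(v)$, which is midpoint convexity; with lower semicontinuity this gives convexity, and together with properness it makes $\mathcal{E}$ a nonlinear Dirichlet form.

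The substance is $(iii)\Rightarrow(i)$, equivalently $(iii)\Rightarrow(ii)$. Let $\mathcal{N}$ be the set of normal contractions compatible with $\mathcal{E}$. By Lemma~\ref{composition} it is closed under composition and contains $\pm\mathrm{id}$; by Lemma~\ref{pw-conv} it is closed under pointwise limits; and, crucially, since $(iii)$ forces $\mathcal{E}$ convex, $\mathcal{N}$ is closed under pointwise convex combinations, because if $C=\lambda C_1+(1-\lambda)C_2$ then $f\pm Cg=\lambda(f\pm C_1 g)+(1-\lambda)(f\pm C_2 g)$ and convexity reduces the inequality for $C$ to those for $C_1$ and $C_2$. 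Through the bijection of the first step, $(iii)$ hands us the order contraction $t\mapsto t_-$ and all symmetric truncations $T_\beta$. It then remains to prove the generation lemma: these, under composition, convex combination and pointwise limits, yield every normal contraction. I would reduce via pointwise limits to piecewise-linear normal contractions; obtain all slopes $c\,\mathrm{id}$ with $|c|\le1$ as convex combinations of $\pm\mathrm{id}$; reposition the symmetric corners of $T_\beta$ by conjugating with scalings; break the symmetry by composing with the order contraction (for instance $(t\wedge 0)\circ T_\beta$, whose only corners lie at $0$ and $-\beta$); and assemble an arbitrary piecewise-linear profile by convex combination.

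The main obstacle is exactly this generation lemma. The data from $(iii)$ are meagre — a single order contraction and the symmetric truncations — so the difficulty is to manufacture contractions with a corner at a prescribed, asymmetric position and a prescribed slope. Closure under composition and pointwise limits alone is visibly insufficient: finite compositions keep the one-sided slopes at the origin within $\{-1,0,1\}$ and so cannot even produce $\tfrac12\,\mathrm{id}$. It is the convexity-driven closure under convex combinations that supplies the missing slopes, and in combination with the symmetry-breaking compositions above the missing corner positions; verifying that these three operations genuinely exhaust all normal contractions is the crux of the proof.
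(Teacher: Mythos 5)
Your treatment of $(i)\Leftrightarrow(ii)$ is correct and is essentially the paper's own argument: the same substitution $u=f+g$, $v=f-g$ with the correspondence $C(t)=p(2t)-t$ (the paper uses the sign-flipped $C(x)=x-p(2x)$, which is equivalent since compatibility is symmetric under $C\mapsto -C$), together with the observation that $p\mapsto C$ is a bijection between increasing normal contractions and all normal contractions. Your $(i)\Rightarrow(iii)$ is also correct and complete; it amounts to the computation the paper carries out in the proof of Theorem~\ref{thm-CG}, $(i)\Leftrightarrow(ii)$, plus midpoint convexity from $p=\tfrac12\,\mathrm{id}$.

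The gap is in $(iii)\Rightarrow(i)$, and you have correctly located it yourself: the ``generation lemma'' --- that $t\mapsto t_-$ and the symmetric truncations $T_\beta$ generate all normal contractions under composition, pointwise convex combination and pointwise limits --- is asserted, not proven, and you explicitly call its verification ``the crux of the proof.'' A proof that stops at its crux is not a proof. Moreover, at least one step of your sketch does not work as stated: ``repositioning the symmetric corners of $T_\beta$ by conjugating with scalings'' would require post-composing with $t\mapsto t/\lambda$ for $\lambda<1$, which is not a normal contraction, so conjugation is not an operation available through Lemma~\ref{composition}; pre- or post-composing with $\lambda\,\mathrm{id}$, $\lambda\in[0,1]$, only yields $\lambda T_{\beta/\lambda}$ or $\lambda T_\beta$, whose corners remain symmetric. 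For comparison, the paper does not prove this direction from scratch either --- it cites Claus, Theorem~2.39 --- and its other two theorems show how much machinery the implication actually costs: Theorem~\ref{thm-CG} passes through the (explicitly ``non-elementary'') resolvent characterization of Cipriani/Grillo, and Theorem~\ref{thm-BH} uses the Brigati--Hartarsky density lemma for compositions of slope-$\pm1$ contractions together with three case lemmas and careful finiteness bookkeeping. Your proposed third route via convex combinations is an interesting idea (and closure of the compatible contractions under convex combinations does follow from convexity, as you argue), but until the generation lemma is established --- in particular, until you can actually produce an asymmetric truncation such as $t\mapsto t\wedge a$, $a>0$, with coefficient $1$ rather than a strictly smaller multiple of it --- the implication $(iii)\Rightarrow(i)$ remains unproven.
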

\begin{proof}
 $(i)\Rightarrow (ii)$: Let $f,g,C$ be given. Set $u = f+g$, $v = f-g$ and define $p\colon \IR\to\IR$ via $p(x):=\tfrac{x}{2} - C(\tfrac{x}{2})$.
 
 This defines an increasing normal contraction as for $x < y$ the condition $0 \leq p(y) - p(x) \leq y-x$ follows from $|C(\tfrac{x}{2}) - C(\tfrac{y}{2})| \leq \tfrac{y-x}{2}$.
 
 Then, $u-v = 2g$ and thus $p(u-v) = g - Cg$. This means $u - p(u-v) = f + Cg$ and $v + p(u-v) = f - Cg$, so $(i)$ turns into $(ii)$.
 
 $(ii)\Rightarrow (i)$: We invert the substitution used in the previous proof. Let $u,v,p$ be given. Set $f = \tfrac{u+v}{2}$, $g = \tfrac{u-v}{2}$ and define $C\colon\IR\to\IR$ via $C(x) := x - p(2x)$.
 
 Here, for all $x < y$, the property $0 \leq p(2y) - p(2x) \leq 2(y-x)$ yields $|C(x) - C(y)|\leq |x-y|$, so $C$ is a normal contraction.
 
 Then, $Cg = \tfrac{u-v}{2} - p(u-v)$ and thus, with $f + Cg = u - p(u-v)$ and $f - Cg = v + p(u-v)$, $(ii)$ turns into $(i)$.
 
 $(i)\Leftrightarrow (iii)$: This is shown in \cite{Cla}, Theorem~2.39.
\end{proof}

\begin{remarks}
 \begin{enumerate}[(a)]
  \item The condition $(i)$ is the contraction property used by \cite{BP, Cla}, while $(ii)$ is our variant.
  \item The above proof shows that the inequality $(\star)$ for a given increasing normal contraction $p$ is equivalent to compatibility with the normal contraction $C$ given by $C(x) := x - p(2x)$. As the proof shows, $(i)\Leftrightarrow (ii)$ holds true regardless of lower semicontinuity.
  \item One can find an analogue of Lemma~\ref{composition} for the inequality $(\star)$. Given $(\star)$ for the increasing normal contractions $p_1, p_2$, we obtain the same inequality for the increasing normal contraction
   $$ \IR\to\IR, \quad x\mapsto p_1(x) + p_2(x - 2p_1(x)). $$
  This was not done in \cite{BP} or \cite{Cla}, presumably because the statement has become rather unwieldy.
 \end{enumerate}
\end{remarks}

\section{Criteria d’après Cipriani / Grillo}

In this section, we will discuss the criterion of Cipriani / Grillo (Definition~\ref{def-main}, from \cite{CG}, Definition~3.1) that was our starting point. Since we have already established equivalence to the contraction property in Theorem~\ref{thm-BP}, we will focus on fitting their criterion into our framework and finding additional characterizations via arguably simpler families of contractions.

The first step will be to rewrite the inequalities appearing in Definition~\ref{def-main} as compatibility with some specific contractions. Taking inspiration from the classical Beurling-Deny criteria \cite{BD, BD1}, we then proceed by exploring some new variants using the resolvent characterization of \cite{CG}, Theorem~3.4. These new characterizations rather closely resemble well-known criteria from the bilinear case.

\begin{theorem}[Compatibility with elementary contractions]\label{thm-CG}
 Let $\mathcal{E}\colon L^2(X,m) \to [0,\infty]$ be a proper, lower semicontinuous, convex functional. Then the following assertions are equivalent:
 
 \begin{itemize}
  \item[(i)] $\mathcal{E}$ is a nonlinear Dirichlet form.
  \item[(ii)] $\mathcal{E}$ is compatible with the set of normal contractions given by
    $$ \IR\to \IR,\quad x \mapsto x_+ = 0\vee x $$
   and
    $$ \IR\to \IR,\quad x\mapsto -\alpha \vee x \wedge \alpha $$
   for all $\alpha > 0$.
  \item[(iii)] $\mathcal{E}$ is compatible with the set of normal contractions given by
    $$ \IR\to \IR,\quad x\mapsto x\wedge \alpha $$
   for all $\alpha \geq 0$.
  \item[(iii)'] $\mathcal{E}$ is compatible with the set of normal contractions given by
    $$ \IR\to \IR,\quad x\mapsto x\wedge \alpha $$
   for all $\alpha > 0$.
  \item[(iv)] $\mathcal{E}$ is compatible with the set of normal contractions given by
    $$ \IR\to \IR,\quad x\mapsto 0 \vee x\wedge \alpha $$
   for all $\alpha > 0$.
 \end{itemize}
\end{theorem}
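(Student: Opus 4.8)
The plan is to read the two inequalities in Definition~\ref{def-main} as compatibility statements and then to produce exactly the contractions they encode from each of the listed families. Since Theorem~\ref{thm-BP} identifies (i) with the full contraction property, the implications from (i) to (ii), (iii), (iii)$'$ and (iv) are immediate: each listed function is a normal contraction, so compatibility with all of them is automatic. The work lies entirely in the converse directions, and there I would not attempt to recover the full contraction property (this is in fact impossible from these families, since monotone generators produce only monotone contractions under composition and pointwise limits, so $x\mapsto|x|$ is never reached); instead I would verify the two defining inequalities directly.

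First I would rewrite Definition~\ref{def-main} with the substitution $u=f+g$, $v=f-g$ from the proof of Theorem~\ref{thm-BP}. A short computation turns the first inequality into compatibility with the single contraction $x\mapsto x\wedge 0$, and turns the second (over all $\alpha>0$) into compatibility with the symmetric truncations $x\mapsto -\beta\vee x\wedge\beta$, $\beta>0$ (with $\beta=\alpha/2$). Thus, for convex lower semicontinuous $\mathcal{E}$, being a nonlinear Dirichlet form is equivalent to compatibility with $x\wedge 0$ together with all symmetric truncations. Both are monotone, which is what makes them reachable.

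With this reduction, (ii), (iii) and (iii)$'$ follow from Lemmas~\ref{composition} and~\ref{pw-conv}. For (ii) the symmetric truncations are present outright, while $x\wedge 0=-((-x)_+)$ comes from $x_+$ by conjugating with $x\mapsto -x$ (Lemma~\ref{composition}). For (iii) the contraction $x\wedge 0$ is the member $\alpha=0$, and each symmetric truncation factors as $(x\wedge\beta)\circ(x\vee(-\beta))$, with $x\vee(-\beta)=-((-x)\wedge\beta)$ again from Lemma~\ref{composition}. For (iii)$'$ one first recovers $x\wedge 0$ as the pointwise limit of $x\wedge\alpha$ as $\alpha\downarrow 0$ (Lemma~\ref{pw-conv}) and then argues as for (iii). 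I would also record the closure property that, because $\mathcal{E}$ is convex, compatibility is preserved under convex combinations of contractions: if $\mathcal{E}(f+C_ig)+\mathcal{E}(f-C_ig)\le \mathcal{E}(f+g)+\mathcal{E}(f-g)$ for $i=1,2$, then applying convexity to $f\pm(\lambda C_1+(1-\lambda)C_2)g$ gives the same for $\lambda C_1+(1-\lambda)C_2$.

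The hard part is (iv), which I expect to be the main obstacle. The functions $0\vee x\wedge\alpha$ are one-sided truncations vanishing on $(-\infty,0]$; letting $\alpha\to\infty$ (Lemma~\ref{pw-conv}) yields $x_+$, hence $x\wedge 0$, so the first defining inequality is recovered. The symmetric truncation, however, cannot be produced this way: compositions and pointwise limits of one-sided truncations remain one-sided (they can never be the identity near $0$ on both sides), and even the convex-combination closure above only yields $\tfrac12(-\beta\vee x\wedge\beta)$, never the truncation itself, because $-\beta\vee x\wedge\beta=(0\vee x\wedge\beta)+(-\beta\vee x\wedge 0)$ is a genuine sum of two contractions with disjoint active supports (a convex combination forces an unavoidable factor $\tfrac12$). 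To close this gap I would pass to the resolvent characterization of \cite{CG}, Theorem~3.4: compatibility with the unit truncations $0\vee x\wedge\alpha$ is precisely the Markovian/order condition on the resolvents $(\mathrm{id}+\lambda\partial\mathcal{E})^{-1}$ occurring there, which returns the nonlinear Dirichlet form property. This is the step where convexity and lower semicontinuity are genuinely used, and it is the crux of the theorem.
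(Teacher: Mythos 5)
Your reduction of Definition~\ref{def-main} to compatibility with $x\wedge 0$ and the symmetric truncations is the same computation the paper performs for $(i)\Leftrightarrow(ii)$, and your treatment of (ii), (iii) and (iii)$'$ by composition (Lemma~\ref{composition}) and pointwise limits (Lemma~\ref{pw-conv}) matches the paper's argument; the paper merely organizes the easy directions as the cycle $(ii)\Rightarrow(iv)\Rightarrow(iii)\Rightarrow(ii)$ instead of returning to (i). You have also correctly located the crux: recovering two-sided truncations from the one-sided family $0\vee x\wedge\alpha$ of (iv).

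At exactly that point, however, your proof has a genuine gap. The closing claim --- that ``compatibility with the unit truncations $0\vee x\wedge\alpha$ is precisely the Markovian/order condition on the resolvents'' of \cite{CG}, Theorem~3.4 --- is not correct as stated. What that theorem provides is, for each closed convex $\mathcal{C}\subseteq L^2(X,m)\times L^2(X,m)$, an equivalence between ``projection onto $\mathcal{C}$ decreases $\mathcal{E}^{(2)}(u,v)=\mathcal{E}(u)+\mathcal{E}(v)$'' and ``$\mathcal{C}$ is invariant under the product resolvent $J_\lambda^{(2)}$''. Under this dictionary (iv) is the invariance of the \emph{strips} $\{(u,v): 0\le u-v\le 2\alpha\}$, whereas what you need is invariance of the half-space $\{u-v\ge 0\}$ (order preservation) and of the half-spaces $\{u-v\le 2\alpha\}$; these are different convex sets, and invariance of an intersection neither implies nor is implied by invariance of its factors in general. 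The missing content is the passage from the former to the latter, which the paper carries out on the resolvent level: for $u\ge v$ one applies the strip invariance to $u_M:=u\wedge(v+M)$ and lets $M\to\infty$, using the $1$-Lipschitz continuity of $J_\lambda$, to obtain $J_\lambda u\ge J_\lambda v$; then, for $u-v\le\alpha$, one applies the strip invariance to $\tilde u:=u\vee v$ and combines it with the order preservation just proved to get $J_\lambda u-J_\lambda v\le J_\lambda \tilde u-J_\lambda v\le\alpha$. Without these two arguments (or a substitute) the implication from (iv) to the remaining statements is not established. Your side remarks --- that $|\cdot|$ is unreachable from monotone generators, and that convexity of $\mathcal{E}$ makes compatibility stable under convex combinations of contractions --- are correct but do not fill this hole.
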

\begin{proof}
 $(i) \Leftrightarrow (ii)$: Using the substitution $(u,v) = (f+g,f-g)$ as in the proof of Theorem~\ref{thm-BP}, we get
   $$ \frac{f+g + (f+g)\wedge (f-g)}{2} = f - (-g)_+ $$
  and
   $$ \frac{f-g + (f+g)\vee (f-g)}{2} = f + (-g)_+, $$
  as well as
  \begin{align*}
   P_{2,2\alpha}^1(f+g,f-g) & = f-g + \frac{1}{2}[(2g + 2\alpha)_+ - (2g - 2\alpha)_-] \\
                            & = f + (-\alpha) \vee g\wedge \alpha
  \end{align*}
  and
  \begin{align*}
   P_{2,2\alpha}^2(f+g,f-g) & = f+g - \frac{1}{2}[(2g + 2\alpha)_+ - (2g - 2\alpha)_-] \\
                            & = f - (-\alpha) \vee g\wedge \alpha.
  \end{align*}
  The statements in $(i)$ and $(ii)$ are therefore equivalent as they are just different formulations of the same inequalities.
 
 $(ii) \Rightarrow (iv)$: The normal contraction in $(iv)$ can be written as a composition of normal contractions in $(ii)$, as for all $x\in \IR$ we have
   $$ 0\vee x\wedge \alpha = (-\alpha \vee x \wedge \alpha)_+. $$
  
  Thus, using Lemma~\ref{composition} gives the desired result.
 
 $(iv) \Rightarrow (iii)$: This proof is non-elementary. Let $\lambda > 0$. Define the nonlinear resolvent $J_\lambda\colon L^2(X,m) \to L^2(X,m)$ as the (unique) minimizer $J_\lambda f$ of 
   $$ g\mapsto \mathcal{E}(g) + \frac{1}{2\lambda}\|f-g\|_{L^2(X,m)}^2. $$
  The resolvent is $1$-Lipschitz, see \cite{Sho}, Section IV.1.
  
  Furthermore, define the functional $\mathcal{E}^{(2)}\colon L^2(X,m) \times L^2(X,m) \to [0,\infty]$ by
   $$ \mathcal{E}^{(2)}(u,v) = \mathcal{E}(u) + \mathcal{E}(v) $$
  for all $u,v\in L^2(X,m)$. Its resolvent 
   $$ J_\lambda^{(2)}\colon L^2(X,m)\times L^2(X,m) \to L^2(X,m)\to L^2(X,m) $$
  is given by $J_\lambda^{(2)}(u,v) = (J_\lambda u, J_\lambda v)$ for all $u,v\in L^2(X,m)$.
 
  Let $\mathcal{C}\subset L^2(X,m) \times L^2(X,m)$ be a closed and convex subset. Using \cite{CG}, Theorem~3.4 (or rather, its proof), we see that the inequality
  \begin{align}
   \tag{+} \mathcal{E}^{(2)}(\mbox{Proj}_\mathcal{C}(u,v)) \leq \mathcal{E}^{(2)}(u,v)
  \end{align}
  for all $u,v\in L^2(X,m)$ is equivalent to $J_\lambda^{(2)} \mathcal{C} \subseteq \mathcal{C}$ for all $\lambda > 0$.
 
  We rewrite the conditions $(iii)$ and $(iv)$ using these projections, along the lines of \cite{CG}, Lemma~3.3.
 
  The Hilbert projection $P$ onto $\mathcal{C}$ is characterized by the fact that its image is contained in $\mathcal{C}$ and for all $\tilde{f}\in L^2(X,m)\times L^2(X,m)$ and all $\tilde{g}\in \mathcal{C}$, we have
   $$ \langle \tilde{f} - P\tilde{f}, \tilde{g} - P\tilde{f} \rangle \leq 0. $$
  We claim that for
   $$ \mathcal{C} = \{(u,v)\in L^2(X,m)\times L^2(X,m), 2a \leq u-v \leq 2b\} $$
  with $a \leq 0 \leq b$ (these bounds are allowed to be infinite), the projection onto $\mathcal{C}$ is given by
   $$ P(f+g, f-g) = (f+Cg, f-Cg) $$
  for all $f,g\in L^2(X,m)$, where $C \colon \IR\to \IR$, $C(x) = a\vee x \wedge b$. Recall that for all $\tilde{f}\in L^2(X,m)\times L^2(X,m)$ there exist $f,g\in L^2(X,m)$ with $\tilde{f} = (f+g, f-g)$.

  First, it is easy to see that
   $$ (f+Cg, f-Cg) \in \mathcal{C} $$
  for all $f,g\in L^2(X,m)$. Given $\tilde{f}\in L^2(X,m)\times L^2(X,m)$ and $\tilde{g}\in \mathcal{C}$, we can write these functions as $\tilde{f} = (f+g,f-g)$ and $\tilde{g} = (u-v, u+v)$ without loss of generality. Here, we observe that $\tilde{g}\in \mathcal{C}$ is equivalent to $a\leq v\leq b$. Then,
  \begin{align*}
   & \langle \tilde{f} - P\tilde{f}, \tilde{g} - P\tilde{f} \rangle \\
   & = \int (f+g - (f+Cg))(u+v - (f+Cg)) dm \\
   & + \int (f-g - (f-Cg))(u-v - (f-Cg)) dm \\
   & = \int (g - Cg)(u+v - (f+Cg)) dm \\
   & + \int -(g - Cg)(u-v - (f-Cg)) dm \\
   & = 2\int (g - Cg)(v - Cg) dm \leq 0,
  \end{align*}
  as $Cg$ is the Hilbert projection of $g$ onto the convex set $\{f\in L^2(X,m) \mid a \leq f\leq b\}$. Thus, the inequality (+) is precisely the compatibility of $\mathcal{E}$ with the normal contraction $C$.
 
  Using this projection, we see that $(iii)$ is the invariance of the sets
   $$ \{(u,v)\mid u,v\in L^2(X,m), u-v\leq \frac{\alpha}{2}\} $$
  and similarly, $(iv)$ is the invariance of the sets
   $$ \{(u,v)\mid u,v\in L^2(X,m), 0\leq u-v\leq \frac{\alpha}{2}\}. $$
 
  Now, we just have to show the implication of invariance under the resolvent. First, we prove $(iii)$ for $\alpha = 0$. By the preceding discussion, this is equivalent to the statement
   $$ u \geq v \quad \Longrightarrow \quad J_\lambda u \geq J_\lambda v $$
  for all $u,v\in L^2(X,m)$ and all $\lambda > 0$.
  
  Let $u,v\in L^2(X,m)$ with $u\geq v$ be given. For all $M > 0$, we define $u_M := u\wedge (v+M)$. Then, $u_M \to u$ in $L^2(X,m)$ as $M\to\infty$. Using $(iv)$, we get
   $$ 0 \leq u_M - v \leq M \quad \Longrightarrow \quad 0\leq J_\lambda u_M - J_\lambda v \leq M, $$
  so using the continuity of the resolvent yields
   $$ J_\lambda u - J_\lambda v = \lim\limits_{M\to\infty} J_\lambda u_M - J_\lambda v \geq 0. $$
  
  We turn to the remaining case $\alpha > 0$. Let $\lambda > 0$ and let $u,v \in L^2(X,m)$ with $u-v \leq \alpha$ be given. We want to prove $J_\lambda u - J_\lambda v \leq \alpha$.
 
  Define $\tilde{u} := u\vee v$. Then, $0 \leq \tilde{u} - v \leq \alpha$, so by $(iv)$, we get
   $$ 0 \leq J_\lambda \tilde{u} - J_\lambda v \leq \alpha. $$
 
  Using the result of the previous case and $u\leq \tilde{u}$, we obtain
   $$ J_\lambda u - J_\lambda v \leq J_\lambda \tilde{u} - J_\lambda v \leq \alpha. $$
 
 $(iii) \Rightarrow (ii)$: Let $\mathcal{E}$ satisfy $(iii)$ and let $f,g\in L^2(X,m)$. Then, using Lemma~\ref{composition} and the compositions
   $$ x_+ = -((-x)\wedge 0) $$
  and for all $\alpha > 0$
   $$ (-\alpha)\vee x\wedge \alpha = (-((-x)\wedge \alpha))\wedge \alpha $$
  gives the desired results.

 $(iii) \Leftrightarrow (iii)'$: This follows easily from Lemma~\ref{pw-conv}.
\end{proof}

\section{The reflection criterion of Brigati / Hartarsky} 
In this section, we relate the contraction property to a condition established by Brigati and Hartarsky in \cite{BH}, Theorem~1.3.

This theorem was used to prove the classical contraction property $\mathcal{E}(Cf) \leq \mathcal{E}(f)$ (\cite{BH}, Theorem~1.2) for symmetric nonlinear Dirichlet forms. We modify their approach to obtain the contraction property. The idea is that instead of $f\mapsto \mathcal{E}(f)$ we consider a symmetrized variant $g\mapsto \mathcal{E}(f+g) + \mathcal{E}(f-g)$, so we can apply the same proof without requiring symmetry of the original functional $\mathcal{E}$.

For the approximation procedure used in this section, we require the functional to be lower semicontinuous. Recalling that nonlinear Dirichlet forms are all lower semicontinuous, we see that this does not restrict our main setting.

We now state the result of this section. The final part of its proof is rather long, so we split it into separate lemmas.

\begin{theorem}(Reflection criterion)\label{thm-BH}
 Let $\mathcal{E}\colon L^2(X,m)\to [0,\infty]$ be a lower semicontinuous functional. The following assertions are equivalent:
 \begin{itemize}
  \item[(i)] $\mathcal{E}$ has the contraction property.
  \item[(ii)] $\mathcal{E}$ is compatible with the normal contractions
  \begin{align}
   \tag{ii.a} |\cdot|\colon \IR\to\IR, \quad x & \mapsto |x|, \\ 
   \tag{ii.b} C_\alpha\colon \IR\to\IR, \quad x & \mapsto (-\alpha - x) \vee x \wedge (\alpha - x) 
  \end{align}
  for all $\alpha \geq 0$.
  \item[(iii)] For all $f,g\in L^2(X,m)$ and all $\alpha \geq 0$, $\mathcal{E}$ verifies
  \begin{align}
   \tag{iii.a} \mathcal{E}(f\vee g) + \mathcal{E}(f\wedge g) & \leq \mathcal{E}(f) + \mathcal{E}(g), \\
   \tag{iii.b} \mathcal{E}(H_\alpha(f,g)) + \mathcal{E}(H_\alpha(g,f)) & \leq \mathcal{E}(f) + \mathcal{E}(g),
  \end{align}
  where
   $$ H_\alpha(f,g) = (g-\alpha) \vee f \wedge (g+\alpha). $$
 \end{itemize}
\end{theorem}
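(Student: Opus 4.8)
\subsection*{Proof proposal}

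The plan is to close the cycle by treating $(i)\Rightarrow(ii)$ and $(ii)\Leftrightarrow(iii)$ as routine reformulations, and then to concentrate all of the work on $(ii)\Rightarrow(i)$, which is where the Brigati/Hartarsky machinery must enter. The implication $(i)\Rightarrow(ii)$ is immediate once one checks that $|\cdot|$ and each $C_\alpha$ are normal contractions: both fix $0$ and are piecewise linear with slopes in $\{-1,+1\}$, hence $1$-Lipschitz.

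For $(ii)\Leftrightarrow(iii)$ I would run the same change of variables as in Theorems~\ref{thm-BP} and \ref{thm-CG}, namely $(u,v)=(f+g,f-g)$, i.e.\ $f=\tfrac{u+v}{2}$, $g=\tfrac{u-v}{2}$. A short case distinction based on the explicit description of $C_\alpha$ (the identity on $[-\tfrac{\alpha}{2},\tfrac{\alpha}{2}]$, equal to $\alpha-x$ for $x\ge\tfrac{\alpha}{2}$ and to $-\alpha-x$ for $x\le-\tfrac{\alpha}{2}$) yields
\[ f+|g|=u\vee v,\qquad f-|g|=u\wedge v \]
and
\[ f+C_\alpha g=H_\alpha(u,v),\qquad f-C_\alpha g=H_\alpha(v,u). \]
Under this substitution compatibility with $|\cdot|$ is precisely (iii.a) and compatibility with $C_\alpha$ is precisely (iii.b), so the two conditions are term-by-term identical.

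The substance is $(ii)\Rightarrow(i)$. First I would reduce to symmetric functionals exactly as in Remark~\ref{rem-contraction}(b): compatibility of $\mathcal{E}$ with a contraction $C$ is the family of inequalities $F_f(Cg)\le F_f(g)$, where $F_f(g):=\mathcal{E}(f+g)+\mathcal{E}(f-g)$. Each $F_f$ is \emph{symmetric}, regardless of whether $\mathcal{E}$ is, and is lower semicontinuous since $\mathcal{E}$ is and $g\mapsto f\pm g$ is continuous; this is exactly what lets one transport the argument to a non-symmetric $\mathcal{E}$. Hence it suffices to show that a symmetric lower semicontinuous functional compatible with $|\cdot|$ and all $C_\alpha$ is compatible with every normal contraction. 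Since the set of contractions compatible with a fixed functional is closed under composition (Lemma~\ref{composition}), contains $\pm\mathrm{id}$, and is closed under pointwise limits (Lemma~\ref{pw-conv}, using lower semicontinuity), it is enough to prove that $\{|\cdot|\}\cup\{C_\alpha:\alpha\ge0\}$ together with $\pm\mathrm{id}$ \emph{generates} all normal contractions under composition and pointwise limits.

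This generation step is the main obstacle, and I expect it to be the long part requiring several auxiliary lemmas. I would split it in two. In the \emph{approximation} stage one shows that every normal contraction $C$ is a pointwise (indeed uniform) limit of piecewise linear normal contractions all of whose slopes are $\pm1$: replace each subunit slope of $C$ by a fine $\pm1$ zigzag tracking its graph, and observe that any normal contraction automatically satisfies $|C_n(t)|\le|t|$, so Lemma~\ref{pw-conv} applies verbatim. (Sending amplitudes to zero already produces the constant contraction $0$, whose compatibility encodes midpoint convexity, so restricting to slope-$\pm1$ maps costs nothing.) In the \emph{decomposition} stage one must realize every piecewise linear slope-$\pm1$ normal contraction as a finite composition of the fold $|\cdot|$ and the reflections $C_\alpha$, modulo $\pm\mathrm{id}$. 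The natural reading is that post-composition with $|\cdot|$ folds the current graph about the line $y=0$ while post-composition with $C_\alpha$ folds it about $y=\pm\tfrac{\alpha}{2}$, and the task is to reproduce an arbitrary finite pattern of turning points by such folds. The delicate point is manufacturing a fold at a single prescribed level and in a prescribed direction out of the symmetric double fold $C_\alpha$; the sign symmetry of $F_f$ (equivalently, compatibility with $\pm\mathrm{id}$) is what allows one to flip fold directions, and I would organize the construction as an induction on the number of breakpoints of the target contraction.
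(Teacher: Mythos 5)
Your treatment of $(i)\Rightarrow(ii)$ and of the change of variables giving $(ii)\Leftrightarrow(iii)$ matches the paper and is fine. The gap is in the ``decomposition stage'' of $(ii)\Rightarrow(i)$: the set $\{|\cdot|\}\cup\{C_\alpha : \alpha\ge 0\}\cup\{\pm\mathrm{id}\}$ does \emph{not} generate all normal contractions under composition and pointwise limits. Indeed, $|\cdot|$ is an even function and each $C_\alpha$ (and $\pm\mathrm{id}$) is odd, so every finite composition of these maps is either even or odd (if any factor is $|\cdot|$, take the innermost such factor: everything inside it is odd, so the composite is even; otherwise all factors are odd and so is the composite). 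The class of even-or-odd functions is also closed under pointwise limits. But a single fold at a nonzero point, e.g.\ $\phi_x(t)=t-2(t-x)_+$ with $x>0$ (or even $t\mapsto t_+$), is neither even nor odd, so no amount of composing and taking limits of your generators will produce it. There is no way to ``manufacture a fold at a single prescribed level'' by post-composition with $|\cdot|$ and $C_\alpha$; this is precisely the obstruction that forces the longer argument.

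What the paper (following Brigati/Hartarsky) actually does for these contractions is not a composition argument at all: compatibility with $\phi_x$, and with the two-breakpoint contractions $\phi_{x_1,x_2}$, is derived in Lemmas~\ref{case1}--\ref{case3} by applying the hypothesis inequalities (ii.a)/(ii.b) to \emph{shifted} pairs --- e.g.\ (ii.a) is applied with base point $f+\tfrac{\sigma_x g+g}{2}$ and increment $\tfrac{\sigma_x g-g}{2}$, where $\sigma_x(t)=\phi_x(t_+)$ --- then summing several such inequalities and cancelling the extraneous terms appearing on both sides. That cancellation requires knowing those terms are finite, which is where Lemma~\ref{convex} (convexity of $\mathcal{E}$, itself extracted from the $C_\alpha$'s by an approximation of the zero contraction) enters; your proposal uses convexity nowhere. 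Your ``approximation stage'' is also not quite the right reduction: what is needed (Lemma~\ref{reduction}, quoted from \cite{BH}) is density of finite compositions of slope-$\pm1$ contractions with \emph{at most two} breakpoints, after which Lemmas~\ref{case1}--\ref{case3} handle exactly those building blocks. As it stands, the proof of the main implication is missing its essential mechanism.
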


\begin{remarks}
 \leavevmode
 \vspace{0em}
 \begin{enumerate}[(a)]
  \item The condition $(iii)$ is exactly the condition in \cite{BH}, Theorem~1.3. There is a small omission in the proof, as \cite{BH}, Theorem~2.3 (cited from \cite{Ba}, Proposition~2.5) only needs the twist condition for all $s+t\leq 1$ and it only holds true in this case.
  \item The equivalence of $(ii)$ and $(iii)$ does not require lower semicontinuity and can be split into $(ii.a)\Leftrightarrow(iii.a)$ and $(ii.b)\Leftrightarrow(iii.b)$. This is useful, as it transforms the condition $(iii.a)$ satisfied by order-preserving forms into compatibility with the normal contraction $|\cdot|$.
  \item The name of this section stems from the geometric interpretation of the criteria of the previous section. More specifically, the maps $L^2(X,m)\times L^2(X,m) \to L^2(X,m)\times L^2(X,m)$ of Cipriani / Grillo are exactly the arithmetic mean of the identity map and the maps used by Brigati / Hartarsky, i.e.
   $$ (\frac{f + f\wedge g}{2}, \frac{g + f\vee g}{2}) = \frac{(f,g) + (f\wedge g, f\vee g)}{2} $$
  and
   $$ P_{2,2\alpha}(f,g) = \frac{(f,g) + H_\alpha(f,g)}{2}. $$
  
  In that sense, one can interpret the criterion of Brigati / Hartarsky \cite{BH} as a statement on the reflections with respect to the projections in Cipriani / Grillo \cite{CG}.
  \item We repeat the proof here to close a small gap in the original proof. Fortunately, this does not require any new ideas.
 \end{enumerate}
\end{remarks}

Most steps of the proof will be rather direct, with the sole exception of the implication $(ii)/(iii) \Rightarrow (i)$. This difficulty motivates taking a closer look at the functionals satisfying $(ii)$, with the goal of proving the contraction property. We thus (mostly) follow in the footsteps of \cite{BH}, Theorem~1.2. The central idea is that the problem can be reduced to contractions of bounded complexity via composition.

We start by proving convexity. This will be useful to show finiteness of some terms we want to subtract from certain inequalities.
 
\begin{lemma}[Convexity]\label{convex}
 Let $\mathcal{E}\colon L^2(X,m)\to [0,\infty]$ be a lower semicontinuous functional satisfying $(ii)$ of Theorem~\ref{thm-BH}, i.e. for all $f,g\in L^2(X,m)$ and all $\alpha \geq 0$, $\mathcal{E}$ is compatible with the normal contractions
  \begin{align}
   \tag{ii.a} |\cdot|\colon \IR\to\IR, \quad x & \mapsto |x|, \\ 
   \tag{ii.b} C_\alpha\colon \IR\to\IR, \quad x & \mapsto (-\alpha - x) \vee x \wedge (\alpha - x) 
  \end{align}
  for all $\alpha \geq 0$.
  Then $\mathcal{E}$ is convex.
\end{lemma}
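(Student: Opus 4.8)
The plan is to deduce convexity from \emph{midpoint convexity} together with the assumed lower semicontinuity. First I would observe that the inequality $2\mathcal{E}(f) \le \mathcal{E}(f+g) + \mathcal{E}(f-g)$ for all $f,g \in L^2(X,m)$ is nothing but compatibility of $\mathcal{E}$ with the (trivial) normal contraction $0\colon \IR \to \IR$, $x \mapsto 0$; writing $a = f+g$ and $b = f-g$ this reads exactly as midpoint convexity $\mathcal{E}(\tfrac{a+b}{2}) \le \tfrac12(\mathcal{E}(a)+\mathcal{E}(b))$. Since a lower semicontinuous, midpoint convex functional is convex (midpoint convexity gives the convexity estimate for all dyadic $\lambda \in [0,1]$, and lower semicontinuity together with $\lambda_k a + (1-\lambda_k)b \to \lambda a + (1-\lambda)b$ in $L^2(X,m)$ extends it to all $\lambda$), the whole statement reduces to showing that $\mathcal{E}$ is compatible with the zero contraction.

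To produce this compatibility I would approximate $0$ by contractions with which $\mathcal{E}$ is already known to be compatible, using Lemma~\ref{composition} (closure under composition) and Lemma~\ref{pw-conv} (closure under pointwise limits, which is where lower semicontinuity enters). The building block is the triangle wave $S_\alpha \colon \IR \to \IR$ of amplitude $\tfrac\alpha2$, i.e. the odd, $2\alpha$-periodic normal contraction with $S_\alpha(x) = x$ on $[-\tfrac\alpha2,\tfrac\alpha2]$ and $S_\alpha(x) = \alpha - x$ on $[\tfrac\alpha2,\tfrac{3\alpha}2]$, whose range is $[-\tfrac\alpha2,\tfrac\alpha2]$. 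The claim is that $S_\alpha$ is the pointwise limit of the iterates $C_\alpha^{\circ n}$ of the contraction $C_\alpha$ from $(ii.b)$. Granting this, Lemma~\ref{composition} shows that $\mathcal{E}$ is compatible with each $C_\alpha^{\circ n}$, and Lemma~\ref{pw-conv} then gives compatibility with $S_\alpha$. Finally, since $\|S_\alpha\|_\infty = \tfrac\alpha2$, we have $S_{1/n} \to 0$ pointwise, so a second application of Lemma~\ref{pw-conv} yields compatibility with the zero contraction, completing the argument.

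The main obstacle is the folding identity $C_\alpha^{\circ n} \to S_\alpha$. The point is that $C_\alpha$ is the identity on $[-\tfrac\alpha2,\tfrac\alpha2]$ and reflects with slope $-1$ outside, so $S_\alpha$ is a fixed point of the map $D \mapsto C_\alpha \circ D$ (because $C_\alpha$ fixes $[-\tfrac\alpha2,\tfrac\alpha2] \supseteq \operatorname{ran} S_\alpha$), while each additional composition folds the remaining tail once more. I would verify by induction on $n$ that $C_\alpha^{\circ n}$ agrees with $S_\alpha$ on the interval $[-(2n-1)\tfrac\alpha2,\,(2n-1)\tfrac\alpha2]$ and is piecewise affine with slopes $\pm1$ outside it; since these intervals exhaust $\IR$, pointwise convergence follows. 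Carrying out this bookkeeping carefully (tracking the locations of the slope changes after each fold) is the only genuinely technical step; everything else is a direct application of the two lemmas.

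I note, incidentally, that the argument uses only the reflection contractions $C_\alpha$ from $(ii.b)$ and never $(ii.a)$, so convexity is in fact a consequence of the reflection part of the hypothesis alone. Should the direct analysis of the iterates $C_\alpha^{\circ n}$ prove unwieldy, an equivalent route is to first build $S_\alpha$ as the pointwise limit of the finite compositions obtained by successively reflecting at the levels $\pm\tfrac\alpha2, \pm\tfrac{3\alpha}2, \dots$, which makes the "growing region of agreement" structure manifest; either way the conclusion is compatibility with $0$ and hence midpoint convexity.
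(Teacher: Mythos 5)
Your proposal is correct and follows essentially the same route as the paper: reduce convexity to midpoint convexity (equivalently, compatibility with the zero contraction, which together with lower semicontinuity gives convexity) and then obtain the zero contraction as a pointwise limit of finite compositions of the $C_\alpha$ via Lemma~\ref{composition} and Lemma~\ref{pw-conv}. The only difference is cosmetic: the paper uses the single sequence $D_n = C_{2\cdot 3^{-n}}\circ\cdots\circ C_{2\cdot 3^{n}}$, which maps $[-3^{n+1},3^{n+1}]$ into $[-3^{-n},3^{-n}]$ and hence tends to $0$ directly, whereas you pass through the intermediate triangle waves $S_\alpha$ and then let $\alpha\to 0$; both arguments work and both use only the contractions from $(ii.b)$.
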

\begin{proof}
 By standard arguments, any lower semicontinuous functional $\mathcal{E}$ satisfying
  $$ \mathcal{E}\left(\frac{f+g}{2}\right) \leq \frac{\mathcal{E}(f) + \mathcal{E}(g)}{2} $$
 for all $f,g\in L^2(X,m)$ must be convex.
 
 Setting $\tilde{f} = \frac{f+g}{2}$ and $\tilde{g} = \frac{f-g}{2}$, we see that convexity is equivalent to compatibility with the vanishing normal contraction $\IR\to\IR, x\mapsto 0$. This normal contraction can be written as a pointwise limit $\lim\limits_{n\to \infty} D_n(x) = 0$ for all $x\in \mathbb{R}$, where the approximants $D_n$ are obtained via composition as
  $$ D_n := C_{2\cdot 3^{-n}}\circ C_{2 \cdot3^{1-n}}\circ \ldots \circ C_{2\cdot3^{n-1}} \circ C_{2\cdot3^n}. $$
 Pointwise convergence follows from the fact that $C_{2\alpha}$ maps the interval $[-3\alpha, 3\alpha]$ to the interval $[-\alpha, \alpha]$, so each normal contraction $D_n$ will map the interval $[-3^{n+1},3^{n+1}]$ into the interval $[-3^{-n}, 3^{-n}]$.
 
 Compatibility with each $D_n$ can be shown by Lemma~\ref{composition} and Lemma~\ref{pw-conv} finishes the proof.
\end{proof}
% \begin{remarks}
%  Since the functional $\mathcal{E}$ is convex, we may ask when $0$ will be a global minimizer. This will be the case for symmetric functionals (i.e. if $\mathcal{E}(f) = \mathcal{E}(-f)$), but for nonsymmetric functionals adding suitable affine linear functionals will shift the minimizer to (almost) arbitrary locations.
% \end{remarks}

The next lemma performs a reduction step. As we did not change anything concerning its setting, it remains an exact replica of \cite{BH}, Lemma~1.4, where its proof can be found.
\begin{lemma}[Reduction]\label{reduction}
 Let $\Phi$ be the set of all normal contractions.

 Let $G$ be the set of all normal contractions $\phi\in \Phi$ such that $|\phi'| = 1$ almost everywhere and $\phi'$ has at most two points of discontinuity. Let $\langle G\rangle$ be the collection of all finite compositions of elements in $G$. Then, $\langle G\rangle$ is dense in $\Phi$ for the pointwise convergence on $\IR$. 
\end{lemma}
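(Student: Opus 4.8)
The plan is to prove density by a three-stage approximation that successively shrinks the class of maps one must realise from $\langle G\rangle$: first to piecewise linear normal contractions, then to piecewise linear normal contractions whose slope equals $\pm 1$ almost everywhere, and finally to write every map of the last kind as a genuine finite composition of the at-most-two-kink generators in $G$. Only the last stage is substantial; I expect it to be the main obstacle. Note that Lemma~\ref{pw-conv} is \emph{not} used here: the statement is a purely real-analytic density assertion about functions, and lower semicontinuity enters only later, when this density is transferred to the contraction property.

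For the first stage, fix $\phi\in\Phi$ and let $\phi_n$ interpolate $\phi$ linearly on the mesh $\tfrac1n\mathbb{Z}\cap[-n,n]$, augmented by the node $0$ and continued affinely outside $[-n,n]$. Since every secant slope of a $1$-Lipschitz function lies in $[-1,1]$ and $\phi_n(0)=\phi(0)=0$, each $\phi_n$ is again a normal contraction; on $[-n,n]$ one has $|\phi_n(x)-\phi(x)|\le \tfrac1n$, so $\phi_n\to\phi$ locally uniformly, hence pointwise. For the second stage, take a piecewise linear normal contraction $\psi$ and, on each affine piece of slope $s\in[-1,1]$, replace $\psi$ by a sawtooth of slopes $\pm 1$ whose proportion of ``up'' steps is $\tfrac{1+s}2$; this reproduces the net slope $s$, and hence the endpoint values up to a single partial tooth, while the oscillation amplitude tends to $0$ as the tooth width does. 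Keeping $0$ among the breakpoints preserves the origin, so these slope-$\pm1$ contractions converge pointwise to $\psi$. Combining the two stages, the piecewise linear normal contractions with slope $\pm1$ almost everywhere and finitely many kinks are pointwise dense in $\Phi$, and it remains only to place each such map inside $\langle G\rangle$.

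I would carry out this last stage by induction on the number $k$ of kinks. For $k\le 2$ the map already lies in $G$. For the inductive step, let $\psi$ have $k\ge 3$ kinks and consider its outermost kink, say at $x=p$ with value $c=\psi(p)$; beyond $p$ the graph is a single monotone affine ray of slope $\pm1$. Reflecting exactly this ray about the horizontal line at height $c$ turns the kink at $p$ into a straight passage and creates no new kink, yielding a slope-$\pm1$ contraction $\psi'$ with $k-1$ kinks for which $\psi=g\circ\psi'$, where $g$ is the single fold $y\mapsto c-|y-c|$ (for a downward kink) or $y\mapsto c+|y-c|$ (for an upward kink). Once the sign of $c$ is arranged, each such fold is a one-kink element of $G$. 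As $\langle G\rangle$ is closed under composition and $\psi'\in\langle G\rangle$ by hypothesis, it follows that $\psi\in\langle G\rangle$.

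The delicate points, which I expect to be the crux, are precisely the bookkeeping in this inductive step: one must choose which tail to reflect so that $0$ stays in the \emph{un}-reflected region and hence $\psi'(0)=\psi(0)=0$ is preserved; one must respect the sign constraint ($c\ge0$ for a downward fold, $c\le0$ for an upward fold) forcing $g$ to be a normal contraction; and, most importantly, one must ensure that the complementary fold $g$ acts as the identity exactly where required, i.e. that the reflection height $c$ dominates (or is dominated by) the relevant range of $\psi$ on the near side. Controlling this global geometry — rather than any single local reflection — is where the real work lies, and it is governed by the monotonicity/twist condition underlying \cite{Ba}, Proposition~2.5; degenerate cases in which the extreme kink coincides with the origin or with a change of monotonicity have to be checked separately.
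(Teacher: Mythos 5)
The paper offers no proof of this lemma at all: it explicitly defers to \cite{BH}, Lemma~1.4 (``it remains an exact replica of \cite{BH}, Lemma~1.4, where its proof can be found''), so there is no in-paper argument to compare yours against. Your first two stages are the standard and correct skeleton of such a proof, and you are right that Lemma~\ref{pw-conv} plays no role here. One minor repair: if the stage-one interpolant is continued affinely outside $[-n,n]$ with a slope in $(-1,1)$, the stage-two sawtooth acquires infinitely many teeth on the unbounded tails; you should instead continue with slope $\pm1$ there, which is harmless for pointwise convergence.

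The genuine gap is in your third stage, exactly where you announce that ``the real work lies'' --- and that work is not done; the inductive step as stated is false. Peeling off the outermost kink $p$ at height $c=\psi(p)$ by the fold $g(y)=c-|y-c|$ requires $g$ to act as the identity on the entire range $\psi((-\infty,p])$, i.e.\ $c$ must be the global maximum of $\psi$ there, and this can fail. Concretely, let $\psi$ have slopes $+1,-1,+1,-1$ with kinks at $2,3,3.5$, so $\psi(0)=0$, $\psi(2)=2$, $\psi(3)=1$, $\psi(3.5)=1.5$: the left tail cannot be reflected without destroying $\psi(0)=0$, while the fold $g(y)=\min(y,3-y)$ at the rightmost kink sends the value $\psi(2)=2$ to $1$, so $\psi\neq g\circ\psi'$. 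In this example pre-composition with $-\mathrm{id}$ happens to rescue the step, but with four kinks (say values $1,-2,2,1$ at $1,4,8,9$, slopes starting at $+1$) the highest peak and deepest valley are both interior, and no outer kink of $\pm\psi(\pm\cdot)$ is removable by a single fold. The true decomposition must be organized around globally extremal fold levels and genuinely uses the two-kink generators of $G$, not only one-kink folds; this is precisely the content of the proof in \cite{BH}. Finally, your appeal to the ``twist condition'' of \cite{Ba}, Proposition~2.5 is a red herring: that result concerns invariance of convex sets under nonlinear semigroups and has no bearing on this purely combinatorial statement about piecewise linear functions.
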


Together with Lemma~\ref{pw-conv}, this means we only need to prove compatibility with contractions in $G$. Details of this reduction step are given in the proof of the theorem.

The contractions in $G$ are of the form $\pm id$ (no sign changes, trivial), $\pm \phi_x$ (one sign change at $x\in \IR$, slope starting at $+1$) and $\pm \phi_{x_1, x_2}$ (two sign changes at $x_1 < x_2$, slope starting at $+1$).

This leads to three interesting cases that will be the subject of one lemma, respectively.

\begin{lemma}[First case]\label{case1}
 Let $\mathcal{E}\colon L^2(X,m)\to [0,\infty]$ be a lower semicontinuous functional satisfying $(ii)$ of Theorem~\ref{thm-BH}, i.e. $\mathcal{E}$ is compatible with the normal contractions
 \begin{align}
  \tag{ii.a} |\cdot|\colon \IR\to\IR, \quad x & \mapsto |x|, \\ 
  \tag{ii.b} C_\alpha\colon \IR\to\IR, \quad x & \mapsto (-\alpha - x) \vee x \wedge (\alpha - x) 
 \end{align}
 for all $\alpha \geq 0$.
  
 Then, $\mathcal{E}$ is compatible with the normal contractions
  $$ \phi_x\colon \IR\to\IR, \quad \phi_x(t) = \begin{cases} t - 2(t-x)_+, & x\geq 0 \\ -t - 2(x-t)_+, & x < 0 \end{cases} $$
 for all $x\in \IR$.
\end{lemma}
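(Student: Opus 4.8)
The plan is to verify the defining inequality of Definition~\ref{def:contraction} for $C=\phi_x$ directly, after two cheap reductions. Compatibility with a contraction $C$ is unchanged under $C\mapsto C(-\,\cdot\,)$: substituting $g\mapsto -g$ in the inequality and using that $f\pm(\cdot)$ enters symmetrically shows that compatibility with $C$ is equivalent to compatibility with $C(-\,\cdot\,)$. Since a short computation gives $\phi_x(-\,\cdot\,)=\phi_{-x}$, this reduces the case $x<0$ to the case $x>0$. The case $x=0$ is immediate, because $\phi_0=(-\mathrm{id})\circ|\cdot|$, so that Lemma~\ref{composition} applies. Hence I may assume $x>0$.

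The first point worth recording is that the comfortable route is \emph{blocked}. The contraction $\phi_x$ is neither even nor odd, whereas $|\cdot|$ is even and $C_\alpha$, $\pm\mathrm{id}$ are odd; consequently every finite composition of the available contractions is even or odd, and—parity being preserved under pointwise limits—so is every pointwise limit of such. Therefore Lemmas~\ref{composition} and~\ref{pw-conv} \emph{cannot} yield $\phi_x$ for $x\neq 0$, and the argument must be carried out at the level of the inequalities rather than the contractions. This is exactly what makes the present case genuinely harder than the routine reductions.

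I therefore pass to the two-function picture $u=f+g$, $v=f-g$, as in the proof of Theorem~\ref{thm-BP}. Under this substitution (and the elementary equivalence recorded in Remark~(b) after Theorem~\ref{thm-BP}, namely that $(\star)$ for an increasing normal contraction $p$ is compatibility with $x\mapsto x-p(2x)$), compatibility with $|\cdot|$ becomes the order inequality
\[
 \mathcal{E}(u\vee v)+\mathcal{E}(u\wedge v)\le \mathcal{E}(u)+\mathcal{E}(v),
\]
compatibility with $C_{2x}$ becomes the symmetric reflection inequality $\mathcal{E}(H_{2x}(u,v))+\mathcal{E}(H_{2x}(v,u))\le \mathcal{E}(u)+\mathcal{E}(v)$, and compatibility with $\phi_x$ (taking $p(s)=(s-2x)_+$) becomes the one-sided transfer inequality
\[
 \mathcal{E}(u\wedge(v+2x))+\mathcal{E}(v\vee(u-2x))\le \mathcal{E}(u)+\mathcal{E}(v).
\]
The plan is to derive this last inequality from the first two. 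The reflection inequality applied to $(u,v)$ already produces precisely the pair $(u\wedge(v+2x),\,v\vee(u-2x))$ on the set $\{u-v>2x\}$; the only discrepancy is an \emph{unwanted} reflection on the opposite set $\{v-u>2x\}$, where $\phi_x$ should act as the identity. I intend to suppress this by feeding the reflection inequality a pre-truncated pair such as $(u,\,v\wedge(u+2x))$ and re-expressing the output, using the order inequality together with the convexity established in Lemma~\ref{convex} to control the energy of the truncation, and lower semicontinuity to justify any limiting step (the relevant level sets $\{|g|>x\}$ having finite measure for $x>0$ is convenient here).

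The main obstacle is precisely \emph{enforcing one-sidedness}. The two hypotheses are symmetric under $u\leftrightarrow v$ while the target is not, and the pointwise ordering of $u$ and $v$ varies over $X$, so one can neither assume $u\ge v$ globally nor split $\mathcal{E}$ across $\{u\ge v\}$ and $\{u<v\}$, $\mathcal{E}$ being a genuinely nonlinear, non-local functional. The crux is thus to combine the order and reflection inequalities in such a way that the reflection is triggered only where $u$ exceeds $v$ by more than $2x$; this rearrangement step is where the real work of the lemma lies.
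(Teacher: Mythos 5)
Your preliminary reductions are all correct, and some are genuinely nice: the substitution $g\mapsto -g$ together with $\phi_x(-t)=\phi_{-x}(t)$ does reduce $x<0$ to $x>0$; the identity $\phi_0=-|\cdot|$ settles $x=0$; the parity argument correctly shows that Lemma~\ref{composition} and Lemma~\ref{pw-conv} alone cannot manufacture $\phi_x$ for $x\neq0$ out of $|\cdot|$ and the $C_\alpha$; and the translation of compatibility with $\phi_x$ into $\mathcal{E}(u\wedge(v+2x))+\mathcal{E}(v\vee(u-2x))\le\mathcal{E}(u)+\mathcal{E}(v)$ is accurate. However, the argument stops exactly where the lemma starts. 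The derivation of this one-sided inequality from (ii.a) and (ii.b) is only announced as a ``plan'', and your final paragraph concedes that the decisive rearrangement ``is where the real work of the lemma lies''. That rearrangement is the entire content of the statement, so what you have is a correct reformulation of the problem, not a proof. Moreover, the specific device you propose does not obviously close: feeding the pair $(u,\,v\wedge(u+2x))$ into the reflection inequality does return $u\wedge(v+2x)$ as one output, but the other output is $(u-2x)\vee\bigl(v\wedge(u+2x)\bigr)\wedge(u+2x)$, which disagrees with the desired $v\vee(u-2x)$ on the set $\{v>u+2x\}$, and the right-hand side now carries $\mathcal{E}(v\wedge(u+2x))$ in place of $\mathcal{E}(v)$; no mechanism is given for trading these terms back.

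For comparison, the paper pre-\emph{orders} rather than pre-truncates. With the companion $\sigma_x(t):=\phi_x(t_+)$ one checks the pointwise identities $\phi_x(t)+t_+=\sigma_x(t)+t$ and $t_+-\phi_x(t)=|\sigma_x(t)-t|$, which say that $\{f\pm\phi_xg,\,f\pm g_+\}$ is the monotone rearrangement of the pair $\{f\pm g,\,f\pm\sigma_xg\}$; hence (ii.a) yields $\mathcal{E}(f\pm\phi_xg)+\mathcal{E}(f\pm g_+)\le\mathcal{E}(f\pm g)+\mathcal{E}(f\pm\sigma_xg)$. Since $\sigma_xg$ arises from $g_+$ by applying one of the contractions $C_\alpha$, condition (ii.b) yields $\mathcal{E}(f+\sigma_xg)+\mathcal{E}(f-\sigma_xg)\le\mathcal{E}(f+g_+)+\mathcal{E}(f-g_+)$. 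Summing the three inequalities and cancelling the common terms gives the claim; the cancellation requires a finiteness argument via Lemma~\ref{convex}, a point your sketch only gestures at. You would need to supply a complete chain of this kind for the proposal to become a proof.
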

\begin{proof}
 Let $x \geq 0$ be a threshold. We will show compatibility with this contraction via its companion $\sigma_x\colon t \mapsto \phi_x(t_+)$. Using the fact that $\phi_x(t) + t_+ = \sigma_x(t) + t$ and $t_+ - \phi_x(t) = |\sigma_x(t) - t|$,
 we get
  $$ \mathcal{E}(f + \phi_x g) = \mathcal{E}(f + \tfrac{\sigma_x g + g}{2} - \tfrac{|\sigma_x g - g|}{2}) $$
 and
  $$ \mathcal{E}(f + g_+) = \mathcal{E}(f + \tfrac{\sigma_x g + g}{2} + \tfrac{|\sigma_x g - g|}{2}). $$
 The condition $(ii.a)$ implies
  $$ \mathcal{E}(f + \phi_x g) + \mathcal{E}(f + g_+) \leq \mathcal{E}(f+g) + \mathcal{E}(f + \sigma_x g). $$
 In a similar vein, we obtain
  $$ \mathcal{E}(f - \phi_x g) + \mathcal{E}(f - g_+) \leq \mathcal{E}(f-g) + \mathcal{E}(f - \sigma_x g). $$
 Since $\sigma_x(t) = C_x(t_+)$, the condition $(ii.b)$ implies
  $$ \mathcal{E}(f + \sigma_x g) + \mathcal{E}(f - \sigma_x g) \leq \mathcal{E}(f + g_+) + \mathcal{E}(f - g_+). $$
 Adding these three inequalities yields an inequality that is the desired one plus some terms on both sides. In order to be able to subtract these extraneous terms, we have to prove their finiteness under the assumption that $\mathcal{E}(f+g) + \mathcal{E}(f-g)$ is finite. This can be done via Lemma~\ref{convex}, i.e.
  $$ \mathcal{E}(f \pm g_+) \leq \frac{1}{2}(\mathcal{E}(f \pm g) + \mathcal{E}(f \pm |g|)) $$
 and the condition $(ii.a)$ imply
  $$ \mathcal{E}(f + g_+) + \mathcal{E}(f - g_+) \leq \mathcal{E}(f + g) + \mathcal{E}(f - g) < \infty. $$
 Applying the third inequality proven here shows the finiteness of $\mathcal{E}(f\pm \sigma_x g)$. As these terms are finite, we can subtract them from both sides of the summed inequality, proving the claim
  $$ \mathcal{E}(f + \phi_x g) + \mathcal{E}(f - \phi_x g) \leq \mathcal{E}(f + g) + \mathcal{E}(f - g). $$
 
 The case $x < 0$ admits basically the same proof.
\end{proof}

\begin{remarks}
 Here we have the first (somewhat) substantial difference to the proof of \cite{BH}, Theorem~1.2. In the original, the finiteness of the extraneous terms was not shown. This small gap should not detract from \cite{BH}.
\end{remarks}

For the functions $\phi_{x_1, x_2}$, we distinguish two cases as the proof depends on whether $0$ is contained in the interval $(x_1, x_2)$ or not.
 
\begin{lemma}[Second case]\label{case2}
 Let $x_1, x_2 \in \IR$ satisfy $0 \leq x_1 < x_2$ or $x_1 < x_2 \leq 0$.
 
 Let $\mathcal{E}\colon L^2(X,m)\to [0,\infty]$ be a lower semicontinuous functional satisfying $(ii)$ of Theorem~\ref{thm-BH}, i.e. $\mathcal{E}$ is compatible with the normal contractions
 \begin{align}
  \tag{ii.a} |\cdot|\colon \IR\to\IR, \quad x & \mapsto |x|, \\ 
  \tag{ii.b} C_\alpha\colon \IR\to\IR, \quad x & \mapsto (-\alpha - x) \vee x \wedge (\alpha - x) 
 \end{align}
 for all $\alpha \geq 0$.
 
 Then, $\mathcal{E}$ is compatible with the normal contraction $\phi_{x_1, x_2}\colon \IR\to\IR$,
  $$ \phi_{x_1, x_2}(t) = \begin{cases} t - 2(t - x_1)_+ + 2(t - x_2)_+, & 0\leq x_1 < x_2 \\ t + 2(x_2 - t)_+ - 2(x_1 - t)_+, & x_1 < x_2 \leq 0. \end{cases} $$
\end{lemma}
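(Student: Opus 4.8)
By the symmetry of the contraction property under $C\mapsto -C(-\cdot)$ (which leaves compatibility invariant, since replacing $g$ by $-g$ merely swaps the two summands on each side of Definition~\ref{def:contraction}), the case $x_1<x_2\le 0$ reduces to the case $0\le x_1<x_2$; so I would assume $0\le x_1<x_2$ throughout. The plan is then to copy the architecture of Lemma~\ref{case1} almost verbatim, replacing its one-kink companion by a two-kink one. Concretely, set $\Sigma:=\phi_{x_1,x_2}\circ(\cdot)_+$, i.e. $\Sigma(t)=\phi_{x_1,x_2}(t_+)$, which is again a normal contraction. A direct check of the three regimes $t\le 0$, $0\le t\le x_1$, $x_1\le t$ gives the pointwise identities $\phi_{x_1,x_2}=\min(\Sigma,\mathrm{id})$ and $(\cdot)_+=\max(\Sigma,\mathrm{id})$, exactly paralleling $\phi_x=\min(\sigma_x,\mathrm{id})$, $(\cdot)_+=\max(\sigma_x,\mathrm{id})$ from the first case.

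With these identities in hand I would run the same three-inequality scheme. Writing $\phi_{x_1,x_2}g$ and $g_+$ as $\min$ and $\max$ of $\Sigma g$ and $g$ and applying compatibility with $|\cdot|$ (condition (ii.a)) to the shifted bases $f\pm\tfrac{\Sigma g+g}{2}$ yields
$$\mathcal{E}(f\pm g_+)+\mathcal{E}(f\pm\phi_{x_1,x_2}g)\le \mathcal{E}(f\pm\Sigma g)+\mathcal{E}(f\pm g).$$
The third inequality is the compatibility statement $\mathcal{E}(f+\Sigma g)+\mathcal{E}(f-\Sigma g)\le\mathcal{E}(f+g_+)+\mathcal{E}(f-g_+)$, which amounts to compatibility of $\mathcal{E}$ with a normal contraction $\Theta$ agreeing with $\phi_{x_1,x_2}$ on $[0,\infty)$, evaluated at $g_+\ge 0$. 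Adding the three inequalities leaves the claim plus the extraneous terms $\mathcal{E}(f\pm g_+)$ and $\mathcal{E}(f\pm\Sigma g)$ on both sides. Their finiteness (whenever $\mathcal{E}(f+g)+\mathcal{E}(f-g)<\infty$) follows exactly as in Lemma~\ref{case1}: convexity (Lemma~\ref{convex}) bounds $\mathcal{E}(f\pm g_+)\le\tfrac12(\mathcal{E}(f\pm g)+\mathcal{E}(f\pm|g|))$, condition (ii.a) controls the $|g|$-terms, and the third inequality then bounds $\mathcal{E}(f\pm\Sigma g)$. Subtracting the finite terms gives the desired inequality.

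The real work is this third inequality, i.e. producing a \emph{compatible} $\Theta$ coinciding with $\phi_{x_1,x_2}$ on the half-line, and this is the step I expect to be the main obstacle. Since $g_+\ge 0$, the behaviour of $\Theta$ on $(-\infty,0)$ is irrelevant, so the task is to realise the profile ``slope $+1$ on $[0,x_1]$, $-1$ on $[x_1,x_2]$, $+1$ on $[x_2,\infty)$'' by a composition of contractions already known to be compatible. The natural candidate is $\Theta=C_{2(x_2-2x_1)}\circ\phi_{x_1}$, which is compatible by Lemma~\ref{composition} ($\phi_{x_1}$ by Lemma~\ref{case1}, $C_\beta$ by (ii.b)); one checks that the negative arm of the tent reflects the descending branch of $\phi_{x_1}$ upward precisely at $x_2$. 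The difficulty is that this identification is clean only for well-separated thresholds ($x_2\ge 3x_1$, with the boundary case $x_2=2x_1$ handled instead by $|\cdot|\circ\phi_{x_1}$): when the kinks are close the naive composition produces spurious extra kinks, because the upward reflection required at the value $2x_1-x_2$ can be routed through $0$ by a single tent only when that value is sufficiently negative. Closing the remaining range of spacings — either by an iterated composition that first widens the gap, or by approximating $\phi_{x_1,x_2}$ through such compositions and invoking Lemma~\ref{pw-conv} — is where I expect the genuine effort to lie; everything else is bookkeeping identical to the first case.
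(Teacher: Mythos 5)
Your reduction of the case $x_1<x_2\le 0$ to $0\le x_1<x_2$ via $C\mapsto -C(-\cdot)$ is correct (and more explicit than the paper, which just asserts the negative case is analogous), your companion $\Sigma(t)=\phi_{x_1,x_2}(t_+)$ is exactly the paper's auxiliary contraction $\psi$, the min/max identities and the two inequalities they produce via (ii.a) are exactly the paper's final pair of inequalities, and the finiteness bookkeeping matches. But the step you flag as the main obstacle is indeed a genuine gap, and you do not close it: the inequality
\begin{equation*}
\mathcal{E}(f+\Sigma g)+\mathcal{E}(f-\Sigma g)\;\le\;\mathcal{E}(f+g_+)+\mathcal{E}(f-g_+)
\end{equation*}
is only established by your argument when a single already-compatible contraction $\Theta$ realises the profile of $\phi_{x_1,x_2}$ on $[0,\infty)$, and your candidate $C_{2(x_2-2x_1)}\circ\phi_{x_1}$ works only for $x_2\ge 3x_1$ (the tent $C_\alpha$ reflects at \emph{both} $\pm\alpha/2$, so for closer kinks the positive reflection point lands inside the range of the ascending branch of $\phi_{x_1}$ and creates a spurious kink). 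The remaining range $x_1<x_2<3x_1$ is not a boundary nuisance but the generic situation, and neither of your two suggested repairs (gap-widening iterated compositions, or approximation plus Lemma~\ref{pw-conv}) is carried out.

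The paper avoids this obstruction by not insisting on a single composed contraction. It interpolates through the auxiliary functions $(g-x_1)_+$ and $\sigma g$, where $\sigma(x)=(0\wedge(x_1-x))\vee(x+x_1-2x_2)$, and applies compatibility twice at \emph{different base points}: once a tent contraction sends $(g-x_1)_+$ to $\pm\sigma g$ (base point $f$), and once $C_{x_1}$ applied at the shifted base $f\pm\tfrac{g_+ +\sigma g}{2}$ produces the pair $\psi g$ and $(g-x_1)_+$ from the pair $g_+$ and $\sigma g$. The point is that one application of Definition~\ref{def:contraction} outputs two \emph{different} functions $F\pm CG$, so the intermediate terms $\mathcal{E}(f\pm(g-x_1)_+)$ and $\mathcal{E}(f\pm\sigma g)$ appear on opposite sides of the summed inequality and cancel; this telescoping at shifted base points is strictly more flexible than composition of contractions and works uniformly in $x_1,x_2$ with no case split on the spacing. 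To complete your proof you would need either to adopt this device or to actually exhibit, for every $0\le x_1<x_2$, a compatible contraction agreeing with $\phi_{x_1,x_2}$ on $[0,\infty)$; as written, the lemma is proved only for well-separated kinks.
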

\begin{proof}
 Assume $0 \leq x_1 < x_2$. We need the auxiliary contractions
  $$ \sigma(x) = (0 \wedge (x_1 - x)) \vee (x + x_1 - 2x_2) $$
 and
  $$ \psi(x) = \phi_{x_1,x_2}(x_+). $$

 This time, we will need to sum up five inequalities. The first inequality rests on the fact that $C_{x_1} (x_+ - \sigma(x)) = \psi(x) - (x - x_1)_+$, implying
  $$ \mathcal{E}(f + \psi g) + \mathcal{E}(f + (g - x_1)_+) \leq \mathcal{E}(f + g_+) + \mathcal{E}(f + \sigma g) $$
 as well as
  $$ \mathcal{E}(f - \psi g) + \mathcal{E}(f - (g - x_1)_+) \leq \mathcal{E}(f - g_+) + \mathcal{E}(f - \sigma g). $$
  
 Next, $C_{x_2 - x_1} ((id - x_1)_+) = -\sigma$ proves the inequality
  $$ \mathcal{E}(f - \sigma g) + \mathcal{E}(f + \sigma g) \leq \mathcal{E}(f + (g - x_1)_+) + \mathcal{E}(f - (g - x_1)_+). $$
  
 The remaining pair of inequalities relies on the condition $(ii.a)$, where $|\psi(x) - x| = x_+ - \phi_{x_1, x_2}(x)$ shows
  $$ \mathcal{E}(f + g_+) + \mathcal{E}(f + \phi_{x_1, x_2} g) \leq \mathcal{E}(f + \psi g) + \mathcal{E}(f + g) $$
 and
  $$ \mathcal{E}(f - g_+) + \mathcal{E}(f - \phi_{x_1, x_2} g) \leq \mathcal{E}(f - \psi g) + \mathcal{E}(f - g). $$

 Summing these five inequalities yields another inequality that has some unwanted terms. Again, we have to prove finiteness of these terms under the assumption that $\mathcal{E}(f+g) + \mathcal{E}(f-g)$ is finite. We recall that the finiteness discussions in the proof of Lemma~\ref{case1} already state that
  $$ \mathcal{E}(f + g_+) + \mathcal{E}(f - g_+) \leq \mathcal{E}(f + g) + \mathcal{E}(f - g). $$
 We move on to the next term, where
  \begin{align*}
   \mathcal{E}(f \pm (g - x_1)_+) & \leq \frac{1}{2}(\mathcal{E}(f \pm g_+) + \mathcal{E}(f \mp \phi_{x_1}g_+))
  \end{align*}
 by convexity of $\mathcal{E}$. Combining this with our observations from Lemma~\ref{case1}, we obtain
  $$ \mathcal{E}(f + (g - x_1)_+) + \mathcal{E}(f - (g - x_1)_+) \leq \mathcal{E}(f + g) + \mathcal{E}(f - g). $$
 The remaining terms $\mathcal{E}(f\pm \sigma g)$ and $\mathcal{E}(f\pm \psi g)$ are shown to be finite by the first three inequalities, so we can subtract them, obtaining the inequality
  $$ \mathcal{E}(f + \phi_{x_1, x_2} g) + \mathcal{E}(f - \phi_{x_1, x_2} g) \leq \mathcal{E}(f + g) + \mathcal{E}(f - g). $$

 As in Lemma~\ref{case1}, the case $x_1 < x_2\leq 0$ admits a mostly identical proof.
\end{proof}
 
We now move on to the final case.
 
\begin{lemma}[Third case]\label{case3}
 Let $x_1 < 0 < x_2$.
 
 Let $\mathcal{E}\colon L^2(X,m)\to [0,\infty]$ be a lower semicontinuous functional satisfying $(ii)$ of Theorem~\ref{thm-BH}, i.e. $\mathcal{E}$ is compatible with the normal contractions
 \begin{align}
  \tag{ii.a} |\cdot|\colon \IR\to\IR, \quad x & \mapsto |x|, \\ 
  \tag{ii.b} C_\alpha\colon \IR\to\IR, \quad x & \mapsto (-\alpha - x) \vee x \wedge (\alpha - x) 
 \end{align}
 for all $\alpha \geq 0$.
 
 Then, $\mathcal{E}$ is compatible with the normal contraction
  $$ \phi_{x_1, x_2}\colon \IR\to\IR, \quad \phi_{x_1, x_2}(t) = -t - 2(t - x_1)_+ + 2(t - x_2)_+. $$
\end{lemma}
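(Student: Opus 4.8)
The plan is to imitate the two-sided bookkeeping of Lemmas~\ref{case1} and \ref{case2}. As there, the idea is to realize $\phi_{x_1,x_2}$ through a small family of auxiliary normal contractions, each obtained from $|\cdot|$ (condition (ii.a)) and the maps $C_\alpha$ with $\alpha\geq 0$ (condition (ii.b)), so that compatibility with these yields a chain of inequalities whose sum is the desired inequality $\mathcal{E}(f+\phi_{x_1,x_2}g)+\mathcal{E}(f-\phi_{x_1,x_2}g)\leq\mathcal{E}(f+g)+\mathcal{E}(f-g)$ together with extraneous terms on both sides. Once every extraneous term is shown to be finite under the assumption $\mathcal{E}(f+g)+\mathcal{E}(f-g)<\infty$, they cancel and the claim follows.

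The genuinely new feature --- and the reason this case is separated from Lemma~\ref{case2} --- is that the origin lies strictly inside the reflected interval $(x_1,x_2)$. In the earlier cases all folds sit on one side of $0$, so a single positive-part companion $t\mapsto\phi(t_+)$ already captures the whole contraction. Here the fold at $x_2>0$ still only sees $g_+$, whereas the fold at $x_1<0$ only sees $g_-=(-g)_+$, so no one-sided companion suffices. (One can check that the naive composition of the two single-fold maps of Lemma~\ref{case1} reconstructs $\phi_{x_1,x_2}$ only up to $2x_2-x_1$ and then creates a spurious third fold, which is exactly why a direct argument is needed.) I would therefore introduce two companions, one adapted to the fold at $x_2$ through $t_+$ and one adapted to the fold at $x_1$ through $t_-$, and glue their contributions using compatibility with $|\cdot|$, in the same way that the identities $\phi_x(t)+t_+=\sigma_x(t)+t$ and $t_+-\phi_x(t)=|\sigma_x(t)-t|$ were used in Lemma~\ref{case1}. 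Since (ii.b) supplies $C_\alpha$ only for $\alpha\geq 0$, the negative fold has to be realized through $C_{-x_1}$ acting on the negative part, so that both parameters entering the argument stay nonnegative.

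Assembling these pieces, I expect a chain of around six inequalities: one pair from (ii.a) relating $g_\pm$, the two companions and $\phi_{x_1,x_2}g$ on the $+$ and $-$ branches, and two further pairs from (ii.b) realizing the folds $C_{x_2}$ and $C_{-x_1}$ of the appropriate truncations. Summing them produces auxiliary terms of the shape $\mathcal{E}(f\pm(g-x_1)_+)$, $\mathcal{E}(f\pm(g-x_2)_-)$ and the companion terms. For finiteness I would reuse the estimate $\mathcal{E}(f+g_+)+\mathcal{E}(f-g_+)\leq\mathcal{E}(f+g)+\mathcal{E}(f-g)$ from Lemma~\ref{case1} together with its mirror for $g_-$ (apply the same estimate to $-g$), and then dominate each truncated term by an average of such controlled quantities using convexity (Lemma~\ref{convex}); the companion terms are finite by the first inequalities of the chain. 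With all extraneous terms finite, subtraction gives the claim.

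The main obstacle is the design step: choosing the two companions and the intermediate truncations so that the pointwise identities actually close up --- that is, so the summed inequalities reproduce $\phi_{x_1,x_2}$ with only \emph{finite}, cancellable remainders --- while keeping every $C_\alpha$ parameter nonnegative. Verifying these identities separately on the three linear pieces $t<x_1$, $x_1<t<x_2$ and $t>x_2$ is the routine but delicate computation at the heart of the proof.
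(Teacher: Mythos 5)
Your proposal is a strategy outline rather than a proof: the step you defer at the end --- ``choosing the two companions and the intermediate truncations so that the pointwise identities actually close up'' --- is not a routine verification left to the reader, it is the entire mathematical content of this lemma. Nothing in your text specifies the two companions, the chain of six inequalities, or the pointwise identities that would make the sum telescope to $\mathcal{E}(f+\phi_{x_1,x_2}g)+\mathcal{E}(f-\phi_{x_1,x_2}g)\leq\mathcal{E}(f+g)+\mathcal{E}(f-g)$, so there is no way to check that your two-sided design (one companion through $t_+$ for the fold at $x_2$, one through $t_-$ for the fold at $x_1$, glued by $|\cdot|$) actually closes. The surrounding bookkeeping you describe --- finiteness of the extraneous terms via convexity (Lemma~\ref{convex}) and the estimate $\mathcal{E}(f+g_+)+\mathcal{E}(f-g_+)\leq\mathcal{E}(f+g)+\mathcal{E}(f-g)$ together with its mirror for $g_-$ --- is correct and matches the paper, and your observation that a naive composition of two single-fold maps fails to reproduce $\phi_{x_1,x_2}$ is a sound reason why this case needs separate treatment. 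But identifying the obstacle is not the same as overcoming it.

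For comparison, the paper's proof does not return to the raw conditions (ii.a)/(ii.b) at all in this case. It uses the already-established Lemma~\ref{case1} as a black box: it introduces the single intermediary $\psi(x)=(x-2x_1)\wedge-(x\wedge x_2)$, notes $\psi=\phi_{x_1}\circ(\mathrm{id}\wedge x_2)$ to get
$$\mathcal{E}(f+\psi g)+\mathcal{E}(f-\psi g)\leq\mathcal{E}(f+(g\wedge x_2))+\mathcal{E}(f-(g\wedge x_2)),$$
and then uses the identity $\phi_{2x_2}(\mathrm{id}-\psi)=\mathrm{id}\wedge x_2-\phi_{x_1,x_2}$ with a second application of Lemma~\ref{case1} to produce the two inequalities coupling $\phi_{x_1,x_2}g$, $g\wedge x_2$, $g$ and $\psi g$. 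Only three inequalities are summed, and the terms $\mathcal{E}(f\pm(g\wedge x_2))$ are controlled by convexity via $\mathcal{E}(f\pm(g\wedge x_2))\leq\tfrac{1}{2}(\mathcal{E}(f\pm g)+\mathcal{E}(f\pm\phi_{x_2}g))$. If you want to salvage your approach, the fastest repair is to adopt this bootstrapping idea: the single-fold maps of Lemma~\ref{case1} are already available, so the missing design problem reduces to finding one intermediary and two explicit identities rather than a six-inequality scheme built from scratch out of (ii.a) and (ii.b).
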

\begin{proof}
 This case is dealt with by introducing the intermediary normal contraction
  $$ \psi(x) := (x - 2x_1) \wedge -(x \wedge x_2). $$
 The observation $\psi = \phi_{x_1} \circ (id \wedge x_2)$ together with the already known compatibility with the normal contractions $\phi_x$ (Lemma~\ref{case1}) implies 
  $$ \mathcal{E}(f + \psi g) + \mathcal{E}(f - \psi g) \leq \mathcal{E}(f + (g\wedge x_2)) + \mathcal{E}(f - (g\wedge x_2)). $$
 Another application of the compatibility with $\phi_x$ in the guise of
  $$ \phi_{2x_2} (id - \psi) = (id\wedge x_2 - \phi_{x_1,x_2}) $$
 yielding
  $$ \mathcal{E}(f + \phi_{x_1,x_2} g) + \mathcal{E}(f + (g \wedge x_2)) \leq \mathcal{E}(f + g) + \mathcal{E}(f + \psi g) $$
 and
  $$ \mathcal{E}(f - \phi_{x_1,x_2} g) + \mathcal{E}(f - (g \wedge x_2)) \leq \mathcal{E}(f - g) + \mathcal{E}(f - \psi g) $$
 again gives us (in total) three inequalities that sum up to almost the desired statement. Once more, we want to get rid of the equal terms on both sides, so we assume $\mathcal{E}(f + g) + \mathcal{E}(f - g) < \infty$ and use convexity to obtain
  $$ \mathcal{E}(f \pm (g\wedge x_2)) \leq \frac{1}{2}(\mathcal{E}(f \pm g) + \mathcal{E}(f \pm \phi_{x_2} g)), $$
 which we combine with Lemma~\ref{case1} to prove
  $$ \mathcal{E}(f + (g\wedge x_2)) + \mathcal{E}(f - (g\wedge x_2)) \leq \mathcal{E}(f + g) + \mathcal{E}(f - g). $$
 The other term
  $$ \mathcal{E}(f + \psi g) + \mathcal{E}(f - \psi g) \leq \mathcal{E}(f + (g\wedge x_2)) + \mathcal{E}(f - (g\wedge x_2)) $$
 is handled by the first inequality. Subtracting all of these terms yields the desired result.
\end{proof}

Now, we have everything we need to assemble the proof of the theorem.

\begin{proof}[Proof of Theorem~\ref{thm-BH}]
 $(ii) \Leftrightarrow (iii)$: The contraction $|\cdot|$ of $(ii.a)$ corresponds to $(iii.a)$ via
  $$ (f\vee g, f\wedge g) = \frac{f+g}{2} \pm \left|\frac{f-g}{2}\right|. $$

 Similarly, the contractions $C_\alpha$ of $(ii.b)$ correspond to $(iii.b)$ via
  $$ (H_\alpha(f,g), H_\alpha(g,f)) = \frac{f+g}{2} \pm C_\alpha\left(\frac{f-g}{2}\right). $$
  
 $(i) \Rightarrow (ii)$: This is clear.
 
 $(iii) \Rightarrow (i)$: Let $\mathcal{E}\colon L^2(X,m) \to [0,\infty]$ be lower semicontinuous and satisfy $(iii)$, i.e.
   \begin{align*}
    \mathcal{E}(f\vee g) + \mathcal{E}(f\wedge g) & \leq \mathcal{E}(f) + \mathcal{E}(g), \\
    \mathcal{E}(H_\alpha(f,g)) + \mathcal{E}(H_\alpha(g,f)) & \leq \mathcal{E}(f) + \mathcal{E}(g)
   \end{align*}
 for all $f,g\in L^2(X,m)$ and all $\alpha \geq 0$. We want to show that $\mathcal{E}$ has the contraction property.

 By Lemma~\ref{convex}, $\mathcal{E}$ must be a convex functional.

 Obviously, compatibility with normal contractions is preserved by compositions as we can just apply the compatibility with each contraction in sequence. As we saw in Lemma~\ref{pw-conv}, compatibility with contractions is also preserved under pointwise limits of these normal contractions.
 
 Thus, according to Lemma~\ref{reduction}, we only have to prove compatibility with the contractions in $G$, i.e. all $\phi\colon \IR \to \IR$ with $|\phi'| = 1$ for all but at most two points (this discussion is functionally identical to the arguments at the end of the proof of \cite{BH}, Theorem 1.2).
 
 These contractions are taken care of by Lemma~\ref{case1} (corresponding to \cite{BH}, Proposition~3.1), Lemma~\ref{case2} (\cite{BH}, Proposition~3.2) and Lemma~\ref{case3} (\cite{BH}, Proposition~3.3).
\end{proof}

\section{Summary}

We now collect all of the results. For the compatibility with specific contractions, we only pick the weakest condition.

Also, for the sake of elegance, we use the interpretation via the $f$-shift, see Remarks \ref{rem-contraction}.

\begin{coro}
 Let $\mathcal{E}\colon L^2(X,m) \to [0,\infty]$ be a proper, lower semicontinuous, convex functional. For all $f\in L^2(X,m)$ with $\mathcal{E}(f) < \infty$, define $\mathcal{E}_f\colon L^2(X,m) \to [0,\infty]$ by
  $$ \mathcal{E}_f(g) := \frac{1}{2}(\mathcal{E}(f+g) + \mathcal{E}(f-g)) - \mathcal{E}(f). $$

 Then the following assertions are equivalent:
 \begin{itemize}
  \item[(i)] $\mathcal{E}$ is a nonlinear Dirichlet form.
  \item[(ii)] $\mathcal{E}$ verifies
    $$ \mathcal{E}_f(Cg) \leq \mathcal{E}_f(g) $$
   for all $f\in D(\mathcal{E})$, $g\in L^2(X,m)$ and all normal contractions $C\colon \IR \to \IR$.
  \item[(iii)] $\mathcal{E}$ verifies
    $$ \mathcal{E}_f(0 \vee g\wedge \alpha) \leq \mathcal{E}_f(g) $$
   for all $f\in D(\mathcal{E})$, $g\in L^2(X,m)$ and all $\alpha \geq 0$.
 \end{itemize}

 If $\mathcal{E}$ is positively $p$-homogeneous (i.e. $\mathcal{E}(\alpha f) = \alpha^p f$ for all $\alpha > 0$) for some $p\geq 1$, then $(iii)$ can be reduced to the case $\alpha = 1$, i.e. 
  $$ \mathcal{E}_f(0\vee g \wedge 1) \leq \mathcal{E}_f(g) $$
 for all $f\in D(\mathcal{E})$ and all $g\in L^2(X,m)$.
\end{coro}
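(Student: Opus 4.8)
The plan is to treat the corollary largely as a repackaging of Theorem~\ref{thm-BP} and Theorem~\ref{thm-CG}, supplemented by a single rescaling argument for the homogeneous reduction. The one piece of bookkeeping I would set up first is the dictionary from Remarks~\ref{rem-contraction}(b): for $f\in D(\mathcal{E})$, multiplying $\mathcal{E}_f(Cg)\leq \mathcal{E}_f(g)$ by $2$ and adding $\mathcal{E}(f)$ recovers the compatibility inequality $\mathcal{E}(f+Cg)+\mathcal{E}(f-Cg)\leq \mathcal{E}(f+g)+\mathcal{E}(f-g)$, while for $f\notin D(\mathcal{E})$ convexity forces the right-hand side to be $\infty$ (since $\mathcal{E}(f)\leq \tfrac12(\mathcal{E}(f+g)+\mathcal{E}(f-g))$), so that the compatibility inequality holds trivially. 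Thus quantifying an $f$-shift inequality over $f\in D(\mathcal{E})$ is the same as demanding the corresponding compatibility over all of $L^2(X,m)$.

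With this dictionary in hand, (ii) is precisely the contraction property of Definition~\ref{def:contraction}, so the equivalence (i)$\Leftrightarrow$(ii) is exactly the content of Theorem~\ref{thm-BP}. The implication (ii)$\Rightarrow$(iii) is immediate, since each $x\mapsto 0\vee x\wedge\alpha$ is a normal contraction. For (iii)$\Rightarrow$(i) I would observe that (iii) reads as compatibility with all contractions $x\mapsto 0\vee x\wedge\alpha$, $\alpha\geq 0$; the endpoint $\alpha=0$ is the zero contraction, i.e. convexity, which is part of the hypotheses, so (iii) is equivalent to the same compatibility for $\alpha>0$. This is condition (iv) of Theorem~\ref{thm-CG}, whence $\mathcal{E}$ is a nonlinear Dirichlet form, closing the cycle (i)$\Rightarrow$(ii)$\Rightarrow$(iii)$\Rightarrow$(i).

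For the homogeneous reduction it suffices to show that, under positive $p$-homogeneity, the $\alpha=1$ instance of (iii) already forces compatibility with $x\mapsto 0\vee x\wedge\alpha$ for every $\alpha>0$; Theorem~\ref{thm-CG} then finishes exactly as above. Here I would fix $\alpha>0$ and arbitrary $f,g\in L^2(X,m)$ and rescale, setting $f'=f/\alpha$ and $g'=g/\alpha$. The elementary identity $0\vee x\wedge\alpha=\alpha\,(0\vee(x/\alpha)\wedge 1)$ gives $0\vee g\wedge\alpha=\alpha\,(0\vee g'\wedge 1)$ pointwise, so applying $\mathcal{E}(\alpha h)=\alpha^{p}\mathcal{E}(h)$ to each of the four terms turns the compatibility inequality for $0\vee x\wedge\alpha$ at $(f,g)$ into $\alpha^{p}$ times the compatibility inequality for $0\vee x\wedge 1$ at $(f',g')$. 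The latter is the $\alpha=1$ hypothesis, and dividing by $\alpha^{p}>0$ yields the claim.

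None of the steps presents a serious obstacle; the argument is essentially citation plus one rescaling. The only points needing care are the translation between the $f$-shift inequalities and genuine compatibility — in particular the treatment of $\mathcal{E}(f)=\infty$ via convexity — and checking that as $(f,g)$ ranges over $L^2(X,m)\times L^2(X,m)$ so does $(f',g')$, so that the $\alpha=1$ hypothesis applies verbatim. The exponent condition $p\geq 1$ is not actually used in the rescaling, which only needs $\alpha^{p}>0$; it is in any case automatic, since a nontrivial convex functional cannot be positively $p$-homogeneous for $0<p<1$.
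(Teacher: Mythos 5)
Your proposal is correct and follows essentially the same route as the paper: the equivalences (i)$\Leftrightarrow$(ii)$\Leftrightarrow$(iii) are obtained by translating the $f$-shift inequalities into the compatibility statements of Theorems~\ref{thm-BP} and \ref{thm-CG} (with the same treatment of $\mathcal{E}(f)=\infty$ via convexity as in Remarks~\ref{rem-contraction}(b)), and the homogeneous reduction is the same rescaling $\mathcal{E}_f(0\vee g\wedge\alpha)=\alpha^p\mathcal{E}_{f/\alpha}(0\vee\tfrac{g}{\alpha}\wedge 1)$ used in the paper. Your extra observations (the $\alpha=0$ endpoint being convexity, and $p\geq 1$ not being needed for the rescaling) are accurate but not essential.
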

\begin{proof}
 Only the last remark is new. For this result, we (mostly) reprise the proof of \cite{CG}, Corollary~3.7.
 
 Let $\alpha > 0$. It is easy to see that $\mathcal{E}(f) < \infty$ implies $\mathcal{E}(\alpha f) < \infty$ and
  $$ \mathcal{E}_{\alpha f}(\alpha g) = \alpha^p \mathcal{E}_f(g). $$
 Thus,
  $$ \mathcal{E}_f(0\vee g\wedge \alpha) = \alpha^p\mathcal{E}_{f/\alpha}(0\vee \tfrac{g}{\alpha}\wedge 1) \leq \alpha^p\mathcal{E}_{f/\alpha}(\tfrac{g}{\alpha}) = \mathcal{E}_f(g). $$
\end{proof}

\end{document}